\documentclass[11pt]{article}

\usepackage{amsmath,amssymb,amsthm,amsfonts}
\usepackage{geometry}

\usepackage{hyperref}

\theoremstyle{plain}
\newtheorem{theorem}{Theorem}[section]

\newtheorem{corollary}[theorem]{Corollary}
\newtheorem{proposition}[theorem]{Proposition}

\theoremstyle{definition}
\newtheorem{definition}[theorem]{Definition}

\theoremstyle{remark}
\newtheorem{remark}[theorem]{Remark}

\numberwithin{equation}{section}

\title{Entrywise Loewner Preservers on Min and Max Matrix Cones}
\author{Wei Xie}
\date{}

\begin{document}
	
	\maketitle
	
\begin{abstract}
Let
\[
A_{\min}(x)=\bigl(x_{\min(i,j)}\bigr)_{i,j=1}^n,
\qquad
A_{\max}(x)=\bigl(x_{\max(i,j)}\bigr)_{i,j=1}^n
\]
be the Min and Max matrices generated by a real sequence
\(x=(x_1,\ldots,x_n)\). Using their classical cone parametrizations, we give
exact characterizations of entrywise maps preserving positive
semidefiniteness, total nonnegativity, Loewner order, and Loewner
convexity.

Our main results concern the Loewner structure.Without assuming continuity or differentiability, entrywise
Loewner-order preservation is equivalent to \(f\) being nondecreasing and
convex. On the Min and Max cones, requiring the
Loewner-convexity inequality on arbitrary pairs is rigid and forces \(f\) to
be affine. Under
the standard convention of restricting the inequality to
Loewner-comparable pairs, the condition automatically forces
\(f\in C^1([0,\infty))\) and is equivalent to convexity of both \(f\) and
\(f'\). The analogous statement holds for Loewner concavity. In each case,
the condition is already detected in dimension two.

We also show that \(f:[0,\infty)\to\mathbb R\) preserves positive
semidefiniteness entrywise on all positive semidefinite Min or Max matrices
if and only if \(f\) is nonnegative and nondecreasing; the same condition
characterizes total-nonnegativity preservation. Finally, we determine the
power-function ranges and Loewner-order automorphisms, characterize the
entrywise preservers of the strict Min and Max classes, and relate these
strict cones to inverse \(M\)-matrices and oscillatory matrices.
\end{abstract}

	\noindent
	\textbf{Keywords:} Min and Max matrix cones; entrywise preserver; total nonnegativity; inverse \(M\)-matrix; Loewner order; oscillatory matrix.
	
	\noindent
	\textbf{MSC 2020:} 15A18; 15B48; 26A51.
	
	\section{Introduction}
	
	Min and Max matrices are classical structured matrices. The standard Min
	and Max matrices are \((\min(i,j))_{i,j=1}^n\) and
	\((\max(i,j))_{i,j=1}^n\), respectively. More generally, for a sequence
	\(x=(x_1,\ldots,x_n)\), one may consider
	\[
	A_{\min}(x)=\bigl(x_{\min(i,j)}\bigr)_{i,j=1}^n,
	\qquad
	A_{\max}(x)=\bigl(x_{\max(i,j)}\bigr)_{i,j=1}^n.
	\]
	
	Throughout the paper, \(A\succeq0\) and \(A\succ0\) mean that \(A\) is
	positive semidefinite and positive definite, respectively, while
	\(A\ge_{\rm e}0\) means that \(A\) is entrywise nonnegative. A matrix is
	called totally nonnegative, abbreviated TN, if all of its minors are
	nonnegative. We use the terms \emph{totally nonnegative} and
	\emph{totally positive} for matrices whose minors are, respectively,
	nonnegative and positive.

	Min matrices can be traced back to the problem book of P\'olya and
	Szeg\H{o}~\cite{PolyaSzego}. The classical Min matrix
	\((\min(i,j))_{i,j=1}^n\) also appears in probability and statistics as the
	covariance matrix of Brownian motion at equally spaced time points.
	Moy\'e~\cite{Moye} studied this covariance matrix in a statistical setting.
	Motivated by that work, Neudecker, Trenkler and
	Liu~\cite{NeudeckerTrenklerLiu} considered the more general matrix
	\(\bigl(a_{\min(i,j)}\bigr)_{i,j=1}^n\) and posed problems concerning its
	determinant, inverse and positive definiteness. These problems were later
	answered by Chu et al.~\cite{ChuPuntanenStyan}, who gave explicit
	determinant and inverse formulas and positive-definiteness criteria.
Stuart~\cite{Stuart2015} studied matrices of the same form under the name
of nested matrices and showed that those generated by strictly increasing
positive sequences are inverse \(M\)-matrices whose inverses are symmetric
irreducible tridiagonal \(M\)-matrices.

	Mattila and Haukkanen~\cite{MattilaHaukkanen2016} studied MIN and MAX
	matrices associated with an ordered multiset \(z_1\le z_2\le\cdots\le z_n\).
	They interpreted \((\min(z_i,z_j))_{i,j=1}^n\) and
	\((\max(z_i,z_j))_{i,j=1}^n\) as special meet and join matrices, and
	obtained formulas for determinants, inverses, inertia and positive
	definiteness. K{\i}l{\i}\c{c} and Ar{\i}kan~\cite{KilicArikan2019}
	studied matrices of the form \(\bigl(a_{\min(i,j)}\bigr)_{i,j\ge1}\) and
	\(\bigl(a_{\max(i,j)}\bigr)_{i,j\ge1}\), together with their reciprocal
	analogues, and derived LU decompositions, Cholesky decompositions, inverse
	matrices and LU decompositions of the inverses.
	
	Subsequent work has treated Min and Max matrices generated by special
	sequences~\cite{SolmazBahsi2022,BahsiSolak2015,Xie2024}, as well as
	\(r\)-min, \(r\)-max, and geometric variants
	\cite{KizilatesTerzioglu2022,FonsecaKizilatesTerzioglu2024}. Spectral and
	numerical aspects have also been studied, including tridiagonal reductions,
	Sturm-type eigenvalue counts, and accurate bidiagonal factorizations
	\cite{Fonseca2007,Mersin2023,KhiarMainarRoyo2025}.

	For positive semidefinite matrices, entrywise preservation has been a
	natural and important line of research. On the full positive semidefinite
	cone, the theory is classical and quite restrictive.
	Schoenberg~\cite{Schoenberg} initiated an important line of work on
	positive definite functions and entrywise preservers. Christensen and
	Ressel~\cite{ChristensenRessel} related entrywise positivity preservers in
	all dimensions to absolute monotonicity, namely nonnegativity of all
	derivatives in the interior. FitzGerald and Horn~\cite{FitzGeraldHorn}
	proved that, for \(n\times n\) positive semidefinite matrices with
	nonnegative entries, the entrywise power map
	\[
	A\mapsto A^{\circ \alpha}
	\]
	preserves positive semidefiniteness if and only if, for \(n\ge2\),
	\[
	\alpha\in\mathbb Z_{\ge0}
	\quad\text{or}\quad
	\alpha\ge n-2.
	\]
	Here and in this literature comparison, the zeroth power uses the convention
	\(0^0=1\).
	Guillot, Khare and Rajaratnam~\cite{GuillotKhareRajaratnam} further
	characterized the critical exponents for Hadamard powers preserving Loewner
	positivity, Loewner monotonicity and Loewner convexity. For \(n\times n\)
	positive semidefinite matrices with nonnegative entries, apart from the
	exceptional nonnegative integer powers, the corresponding thresholds are
	\(n-2\), \(n-1\) and \(n\).
	
	Min matrices exhibit a different behavior under entrywise maps.
	Bhatia~\cite{Bhatia} studied their infinite divisibility and observed more
	generally that positive-valued nondecreasing functions preserve positive
	semidefiniteness entrywise on Min kernels. This suggests that the exact
	entrywise preserver problem on the Min and Max classes differs
	substantially from the corresponding problem on the full positive
	semidefinite cone. Our formulation below includes necessity, the boundary
	at zero, the equivalent total-nonnegativity statement, and the Max
	analogue.

	This paper studies Min and Max matrices generated by arbitrary sequences,
	with emphasis on several associated positivity structures. One theme is the
	action of entrywise functions on positive semidefiniteness and total
	nonnegativity, together with closure under Hadamard products. Since
	applying a function entrywise to a Min matrix again gives a Min matrix, and
	similarly for Max matrices, these preserver problems can be reduced to
	questions about the generating sequence. Positive-definiteness and
	positive-semidefiniteness criteria for Min and Max matrices are already
	available; it is therefore natural to ask whether replacing positive
	semidefiniteness by total nonnegativity leads to the same entrywise
	preservers.
	
Another theme concerns strict Min and Max matrices, namely those generated
by strictly monotone positive sequences. These matrices connect the present
setting with the classical theories of totally nonnegative matrices, inverse
\(M\)-matrices, and oscillation matrices; see, for example,
Pinkus~\cite{Pinkus}, Gantmacher--Krein~\cite{GantmacherKrein}, and
Stuart~\cite{Stuart2015}. We record the structural equivalences needed to
formulate and solve the entrywise preserver problem on the strict classes.

Finally, motivated by the classical parallel study of Loewner positivity,
monotonicity, and convexity for entrywise powers, we consider Loewner-order
preservation and Loewner convexity on comparable pairs in the Min and Max
cones. The Loewner order on these cones can be described in terms of
differences of the generating sequences, leading to conditions on the
increments and Jensen gaps of the entrywise function. We also consider the
corresponding Loewner concavity problem and Loewner-order automorphisms. To
clarify the role of comparability, we contrast this standard convention with
the stronger requirement imposed on arbitrary pairs, which turns out to be
rigid.


	\section{Min and Max positivity cones}\label{sec:cones}
	
	We begin with the basic cone terminology and the standard factorizations
	used throughout the paper.
	
	\begin{definition}
		A subset \(C\) of a real vector space is called a convex cone if
		\(\alpha x+\beta y\in C\) whenever \(x,y\in C\) and
		\(\alpha,\beta\ge0\). If
		\[
		C=\operatorname{cone}\{v_1,\ldots,v_m\}
		=
		\left\{
		\alpha_1v_1+\cdots+\alpha_mv_m:
		\alpha_i\ge0
		\right\}
		\]
		with \(v_1,\ldots,v_m\) linearly independent, then \(C\) is called a
		simplicial cone.
	\end{definition}
	
	\begin{definition}
		For \(x=(x_1,\ldots,x_n)\in\mathbb R^n\), define
		\[
		A_{\min}(x)=\bigl(x_{\min(i,j)}\bigr)_{i,j=1}^n,
		\qquad
		A_{\max}(x)=\bigl(x_{\max(i,j)}\bigr)_{i,j=1}^n.
		\]
		We call them the Min and Max matrices generated by \(x\).
	\end{definition}
	
	For \(x=(x_1,\ldots,x_n)\), put \(x_0=0\) and
	\(\Delta_k=x_k-x_{k-1}\) for \(k=1,\ldots,n\). Let \(L_n\) be the lower
	triangular matrix with all entries on and below the diagonal equal to \(1\):
	\[
	L_n=
	\begin{pmatrix}
		1&0&0&\cdots&0\\
		1&1&0&\cdots&0\\
		1&1&1&\cdots&0\\
		\vdots&\vdots&\vdots&\ddots&\vdots\\
		1&1&1&\cdots&1
	\end{pmatrix}.
	\]
	Then
	\begin{equation}\label{eq:min-factorization}
		A_{\min}(x)=L_nD_xL_n^T,
		\qquad
		D_x=\operatorname{diag}(\Delta_1,\ldots,\Delta_n).
	\end{equation}
	Indeed,
	\[
	(L_nD_xL_n^T)_{ij}
	=
	\sum_{k=1}^{\min(i,j)}\Delta_k
	=
	x_{\min(i,j)}.
	\]
	
	Equivalently, for \(k=1,\ldots,n\), let
	\(u_k=(0,\ldots,0,1,\ldots,1)^T\), where the first \(k-1\) entries are
	\(0\), and the entries from \(k\) to \(n\) are \(1\). Then
	\[
	A_{\min}(x)=\sum_{k=1}^n \Delta_k u_ku_k^T.
	\]

	\begin{theorem}\label{thm:min-psd-tn}
		For \(x\in\mathbb R^n\), the following are equivalent:
		\begin{enumerate}
			\item \(A_{\min}(x)\succeq0\).
			\item \(A_{\min}(x)\) is TN.
			\item \(0\le x_1\le x_2\le\cdots\le x_n\).
		\end{enumerate}
	\end{theorem}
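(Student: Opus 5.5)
The factorization \eqref{eq:min-factorization}, \(A_{\min}(x)=L_nD_xL_n^T\), drives all three implications. Condition (3) is equivalent to \(\Delta_k\ge0\) for all \(k=1,\ldots,n\), since \(x_0=0\). I will prove (3)\(\Rightarrow\)(2), then (2)\(\Rightarrow\)(1), then (1)\(\Rightarrow\)(3).

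For (3)\(\Rightarrow\)(2), I first check that \(L_n\) is TN. A direct route is the Cauchy--Binet formula: write \(L_n\) as a product of elementary bidiagonal matrices \(I+E_{k+1,k}\) for \(k=n-1,\ldots,1\). Each factor has nonnegative entries and only trivially nonzero minors, so each is TN, and products of TN matrices are TN. Alternatively, one can note directly that every minor of the 0/1 staircase matrix \(L_n\) equals \(0\) or \(1\).

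Next, \(D_x\) is diagonal with nonnegative entries, so it is TN. The transpose \(L_n^T\) is TN because it has the same minors as \(L_n\). By Cauchy--Binet, the product \(L_nD_xL_n^T\) is TN, which gives (2).

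For (2)\(\Rightarrow\)(1), note that a TN matrix has all principal minors nonnegative. Since \(A_{\min}(x)\) is symmetric, this gives \(A_{\min}(x)\succeq0\) by the standard criterion that a symmetric matrix is positive semidefinite if and only if all its principal minors are nonnegative.

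For (1)\(\Rightarrow\)(3), \(L_n\) is invertible, so \(D_x=L_n^{-1}A_{\min}(x)L_n^{-T}\) is congruent to \(A_{\min}(x)\) and hence positive semidefinite. Its diagonal entries \(\Delta_k\) are therefore nonnegative, which is exactly (3). Concretely, testing with the vector \(v=L_n^{-T}e_k=e_k-e_{k+1}\) (and \(v=e_n\) when \(k=n\)) gives \(v^TA_{\min}(x)v=\Delta_k\ge0\).

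The only step needing care is the total nonnegativity of \(L_n\), which I handle through the bidiagonal factorization and Cauchy--Binet. The other implications are immediate consequences of the factorization.
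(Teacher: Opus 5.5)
Your proof is correct and is essentially the paper's: you use the factorization \(A_{\min}(x)=L_nD_xL_n^T\) with total nonnegativity of the factors for (3)\(\Rightarrow\)(2), the principal-minor criterion for (2)\(\Rightarrow\)(1), and the congruence \(D_x=L_n^{-1}A_{\min}(x)L_n^{-T}\) for (1)\(\Rightarrow\)(3), and your bidiagonal factorization usefully justifies the total nonnegativity of \(L_n\), which the paper only asserts. One slip is in your ``concretely'' remark: in fact \(L_n^{-T}e_k=e_k-e_{k-1}\) for \(k\ge2\) and \(L_n^{-T}e_1=e_1\), whereas your vectors \(e_k-e_{k+1}\) and \(e_n\) are the columns of \(L_n^{-1}\); they yield \(\Delta_{k+1}\) and \(x_n\) and so miss \(x_1\ge0\) (e.g.\ \(x=(-1,0)\) passes both tests but is not positive semidefinite), though your congruence argument itself is unaffected.
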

	
	\begin{proof}
		If
		\[
		0\le x_1\le\cdots\le x_n,
		\]
		then \(\Delta_k\ge0\) for all \(k\). Hence, by
		\eqref{eq:min-factorization}, \(A_{\min}(x)\succeq0\).
		
		Moreover, \(L_n\) and \(D_x\) are totally nonnegative. Since products of
		totally nonnegative matrices are totally nonnegative \cite{Pinkus}, it
		follows that \(A_{\min}(x)\) is TN.
		
		Conversely, if \(A_{\min}(x)\succeq0\), then
		\[
		D_x=L_n^{-1}A_{\min}(x)L_n^{-T}
		\]
		satisfies \(D_x\succeq0\). Hence \(\Delta_k\ge0\) for all \(k\), which
		gives
		\[
		0\le x_1\le\cdots\le x_n.
		\]
		
		Finally, if \(A_{\min}(x)\) is TN, then all principal minors are
		nonnegative. Since \(A_{\min}(x)\) is symmetric, the principal-minor
		criterion implies \(A_{\min}(x)\succeq0\).
	\end{proof}
	
	Theorem~\ref{thm:min-psd-tn} shows that the positive semidefinite Min
	matrices are exactly those generated by nonnegative nondecreasing
	sequences. This class is closed under addition and multiplication by
	nonnegative scalars. We therefore make the following definition.
	
	\begin{definition}
		The positive semidefinite Min cone is
		\[
		\mathcal C_{\min,n}
		=
		\{A_{\min}(x):0\le x_1\le\cdots\le x_n\}.
		\]
	\end{definition}
	
	\begin{proposition}\label{prop:min-simplicial}
		The positive semidefinite Min cone is the simplicial cone
		\[
		\mathcal C_{\min,n}
		=
		\operatorname{cone}\{u_1u_1^T,\ldots,u_nu_n^T\}.
		\]
		Equivalently, every \(A_{\min}(x)\in\mathcal C_{\min,n}\) has the unique
		representation
		\[
		A_{\min}(x)=\sum_{k=1}^n \Delta_k u_ku_k^T,
		\qquad
		\Delta_k=x_k-x_{k-1}\ge0,
		\]
		and the generators \(u_1u_1^T,\ldots,u_nu_n^T\) are linearly independent.
	\end{proposition}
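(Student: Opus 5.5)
The plan is to prove three things in order: the two sets coincide, the representation is unique, and the generators are linearly independent. For the set equality, start from the rank-one expansion $A_{\min}(x)=\sum_{k=1}^n \Delta_k u_ku_k^T$, which was already derived from \eqref{eq:min-factorization}.

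If $A_{\min}(x)\in\mathcal C_{\min,n}$, then $0\le x_1\le\cdots\le x_n$. With $x_0=0$ this gives $\Delta_k\ge0$ for every $k$. Hence $A_{\min}(x)$ is a nonnegative combination of the $u_ku_k^T$, so $\mathcal C_{\min,n}\subseteq\operatorname{cone}\{u_1u_1^T,\ldots,u_nu_n^T\}$.

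Conversely, take an arbitrary element $\sum_k\alpha_k u_ku_k^T$ with $\alpha_k\ge0$. Its $(i,j)$ entry is $\sum_{k\le\min(i,j)}\alpha_k$, because $(u_ku_k^T)_{ij}=1$ exactly when $k\le i$ and $k\le j$. Setting $x_m=\alpha_1+\cdots+\alpha_m$, this entry equals $x_{\min(i,j)}$. The sequence satisfies $0\le x_1\le\cdots\le x_n$, so the element lies in $\mathcal C_{\min,n}$.

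For linear independence, suppose $\sum_k\beta_k u_ku_k^T=0$ with real $\beta_k$. By the same entry computation, the diagonal entries give $\beta_1+\cdots+\beta_m=0$ for each $m=1,\ldots,n$. Differencing consecutive equations yields $\beta_m=0$ for all $m$.

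An alternative is to write the sum as $L_n\operatorname{diag}(\beta)L_n^T$ and use invertibility of $L_n$, exactly as in the proof of Theorem~\ref{thm:min-psd-tn}: then $\operatorname{diag}(\beta)=L_n^{-1}\cdot0\cdot L_n^{-T}=0$. Uniqueness of the representation follows immediately from linear independence. Consequently the coefficients must be $\Delta_k=x_k-x_{k-1}$, which are nonnegative by the first step.

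No step is a genuine obstacle. The only point needing care is the bookkeeping that identifies the coefficient vector $\alpha$ with the increments $\Delta$ of the partial-sum sequence. The convention $x_0=0$ is what makes $\Delta_1=x_1\ge0$ encode the condition $x_1\ge0$.
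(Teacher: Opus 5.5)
Your proof is correct and follows essentially the paper's route: the representation comes from the factorization $A_{\min}(x)=L_nD_xL_n^T$, and linear independence holds because $\sum_k c_k u_ku_k^T$ is the Min matrix of the partial sums $c_1+\cdots+c_m$, so if it vanishes then every $c_k=0$. The one difference is that you also prove the reverse inclusion $\operatorname{cone}\{u_1u_1^T,\ldots,u_nu_n^T\}\subseteq\mathcal C_{\min,n}$ explicitly, which the paper leaves implicit.
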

	
	\begin{proof}
		The representation follows from \eqref{eq:min-factorization}. To prove
		uniqueness, suppose
		\[
		\sum_{k=1}^n c_k u_ku_k^T=0.
		\]
		The left-hand side is a Min matrix whose successive increments are
		precisely \(c_1,\ldots,c_n\). Since the resulting matrix is zero, its
		generating sequence is identically zero, and all increments vanish.
		Therefore \(c_1=\cdots=c_n=0\). Thus the generators are linearly
		independent, and \(\mathcal C_{\min,n}\) is simplicial.
	\end{proof}
	
	The determinant is also immediate from the factorization:
	\[
	\det A_{\min}(x)
	=
	\det D_x
	=
	x_1\prod_{k=2}^n (x_k-x_{k-1}).
	\]
	
	The corresponding Max statements are obtained by reversing the order of the
	indices. Let \(J_n\) be the reversal matrix, that is,
	\[
	(J_n)_{ij}=
	\begin{cases}
		1, & i+j=n+1,\\
		0, & \text{otherwise}.
	\end{cases}
	\]
	If \(y_k=x_{n+1-k}\), then
	\begin{equation}\label{eq:max-min-reversal}
		J_nA_{\max}(x)J_n=A_{\min}(y).
	\end{equation}
	
Consequently, \(A_{\max}(x)\succeq0\), \(A_{\max}(x)\) is TN, and
\[
x_1\ge x_2\ge\cdots\ge x_n\ge0
\]
are equivalent. Total nonnegativity is preserved under simultaneous reversal
because the row and column permutations contribute the same sign to each
minor. We therefore write
\[
\mathcal C_{\max,n}
=
\{A_{\max}(x):x_1\ge x_2\ge\cdots\ge x_n\ge0\}
\]
for the positive semidefinite Max cone.
	
	For \(k=1,\ldots,n\), let
	\(v_k=(1,\ldots,1,0,\ldots,0)^T\), where the first \(k\) entries are \(1\)
	and the remaining entries are \(0\). Put \(x_{n+1}=0\) and
	\(\nabla_k=x_k-x_{k+1}\) for \(k=1,\ldots,n\). Then
	\[
	A_{\max}(x)=\sum_{k=1}^n \nabla_k v_kv_k^T.
	\]
	Indeed, the coefficient \(\nabla_k\) contributes to the \((i,j)\)-entry
	precisely when \(k\ge \max(i,j)\), and hence
	\[
	\sum_{k=\max(i,j)}^n \nabla_k
	=
	x_{\max(i,j)}.
	\]
	
Thus \(\mathcal C_{\max,n}\) is likewise the simplicial cone generated by
\(v_1v_1^T,\ldots,v_nv_n^T\); uniqueness follows either from the displayed
representation or from Proposition~\ref{prop:min-simplicial} by simultaneous
reversal.
	
	\section{Entrywise preservers and Hadamard products}\label{sec:entrywise}
	
	\begin{definition}
		Let \(f:[0,\infty)\to\mathbb R\). For a matrix \(A=(a_{ij})\) with entries
		in \([0,\infty)\), define \(f[A]=(f(a_{ij}))\). We say that \(f\)
		preserves a matrix class \(\mathcal S\) entrywise if \(A\in\mathcal S\)
		implies \(f[A]\in\mathcal S\).
	\end{definition}
	
	For matrices of the same size, \(A\circ B\) denotes their Hadamard product.
	
	\begin{proposition}\label{prop:hadamard-closure}
		The cones \(\mathcal C_{\min,n}\) and \(\mathcal C_{\max,n}\) are closed
		under Hadamard products. More precisely,
		\[
		A_{\min}(x)\circ A_{\min}(y)=A_{\min}(xy),
		\]
		and
		\[
		A_{\max}(x)\circ A_{\max}(y)=A_{\max}(xy),
		\]
		where \(xy=(x_1y_1,\ldots,x_ny_n)\).
	\end{proposition}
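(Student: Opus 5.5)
The proof splits into two parts: the entrywise identity, and then closure of the cones using Theorem~\ref{thm:min-psd-tn}.

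For the identity, we compare entries directly. The \((i,j)\)-entry of \(A_{\min}(x)\circ A_{\min}(y)\) is \(x_{\min(i,j)}y_{\min(i,j)}=(xy)_{\min(i,j)}\), because both factors are indexed by the same integer \(m=\min(i,j)\). This is exactly the \((i,j)\)-entry of \(A_{\min}(xy)\). The Max identity follows in the same way with \(m=\max(i,j)\). Alternatively, it can be obtained from \eqref{eq:max-min-reversal}: the map \(A\mapsto J_nAJ_n\) is a simultaneous permutation, so it commutes with Hadamard products.

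For closure, let \(A_{\min}(x),A_{\min}(y)\in\mathcal C_{\min,n}\). By the definition of the cone, or equivalently by Theorem~\ref{thm:min-psd-tn}, we have \(0\le x_1\le\cdots\le x_n\) and \(0\le y_1\le\cdots\le y_n\). Every \(x_ky_k\) is then nonnegative. The only genuine point is that the product sequence is nondecreasing, and here nonnegativity is essential. We write
\[
x_{k+1}y_{k+1}-x_ky_k=x_{k+1}(y_{k+1}-y_k)+y_k(x_{k+1}-x_k).
\]
Each summand is a product of two nonnegative numbers, so the difference is \(\ge0\). Hence \(xy\) is nonnegative and nondecreasing, and \(A_{\min}(xy)\in\mathcal C_{\min,n}\) by Theorem~\ref{thm:min-psd-tn}.

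The Max case works the same way, with nonincreasing nonnegative sequences. We use the splitting \(x_ky_k-x_{k+1}y_{k+1}=x_k(y_k-y_{k+1})+y_{k+1}(x_k-x_{k+1})\), in which every factor is again nonnegative. One could also reduce to the Min case through reversal.

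The monotonicity of the product is the only step that needs any care, and the decomposition above settles it. We could also observe that closure follows from the Schur product theorem, since Hadamard products of positive semidefinite matrices are positive semidefinite. That observation alone would not show that the product lies in the cone, however. The explicit identity is what shows the product is again a Min (resp. Max) matrix, and Theorem~\ref{thm:min-psd-tn} then places it in the cone.
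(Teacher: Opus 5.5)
Your proof is correct and follows the paper's argument: the entrywise identity via the common index \(\min(i,j)\) (resp.\ \(\max(i,j)\)), then the observation that the product of two nonnegative monotone sequences is again nonnegative and monotone in the same direction. Your splitting \(x_{k+1}y_{k+1}-x_ky_k=x_{k+1}(y_{k+1}-y_k)+y_k(x_{k+1}-x_k)\) spells out a step the paper states without proof.
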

	
	\begin{proof}
		For Min matrices,
		\[
		(A_{\min}(x)\circ A_{\min}(y))_{ij}
		=
		x_{\min(i,j)}y_{\min(i,j)}
		=
		(xy)_{\min(i,j)}.
		\]
		If \(x\) and \(y\) are nonnegative and nondecreasing, then \(xy\) is also
		nonnegative and nondecreasing. The Max case is identical, with
		nonincreasing sequences in place of nondecreasing sequences.
	\end{proof}
	
	\begin{remark}
		The Hadamard-product closure is a special case of the corresponding
		classical result for totally nonnegative Green matrices; see
		Pinkus~\cite[Section 4.10]{Pinkus}. The positive Hadamard powers are also
		consistent with the infinite divisibility of Min matrices discussed by
		Bhatia~\cite{Bhatia}.
	\end{remark}
	
	For positive-valued functions on \((0,\infty)\), the sufficiency of
	monotonicity for preserving positive semidefiniteness on Min kernels was
	observed by Bhatia~\cite{Bhatia}. The following theorem gives the exact
	version on \([0,\infty)\), allows zero values, and includes the equivalent
	total-nonnegativity formulation.
	
	\begin{theorem}\label{thm:entrywise-preserver}
		Let \(f:[0,\infty)\to\mathbb R\). The following are equivalent:
		\begin{enumerate}
			\item For every \(n\ge1\) and every \(A\in\mathcal C_{\min,n}\), one has
			\(f[A]\succeq0\); equivalently, the same holds on every Max cone.
			\item For every \(n\ge1\) and every \(A\in\mathcal C_{\min,n}\), one has
			\(f[A]\) totally nonnegative; equivalently, the same holds on every Max
			cone.
			\item For every \(n\ge1\) and every \(A\in\mathcal C_{\min,n}\), one has
			\(f[A]\in\mathcal C_{\min,n}\); equivalently, \(f[A]\in
			\mathcal C_{\max,n}\) for every \(A\in\mathcal C_{\max,n}\).
			\item \(f(t)\ge0\) for all \(t\ge0\), and \(f\) is nondecreasing on
			\([0,\infty)\).
		\end{enumerate}
	\end{theorem}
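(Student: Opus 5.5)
The plan is to route all four conditions through Theorem~\ref{thm:min-psd-tn}. The key observation is that entrywise application commutes with the Min structure:
\[
f[A_{\min}(x)]=A_{\min}(f(x)),
\qquad
f(x)=(f(x_1),\ldots,f(x_n)),
\]
since \(f(x_{\min(i,j)})=f(x)_{\min(i,j)}\). Hence \(f[A]\) is always a Min matrix. By Theorem~\ref{thm:min-psd-tn}, for a Min matrix the properties ``positive semidefinite'', ``TN'' and ``belongs to \(\mathcal C_{\min,n}\)'' coincide. This gives (1)\(\Leftrightarrow\)(2)\(\Leftrightarrow\)(3) for the Min cones immediately and for each fixed \(n\).

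For (4)\(\Rightarrow\)(3), take \(A=A_{\min}(x)\) with \(0\le x_1\le\cdots\le x_n\). Monotonicity of \(f\) gives \(f(x_1)\le\cdots\le f(x_n)\), and nonnegativity gives \(f(x_1)\ge0\). So \(f(x)\) is a nonnegative nondecreasing sequence and \(f[A]\in\mathcal C_{\min,n}\).

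For (3)\(\Rightarrow\)(4) I will use small test matrices. With \(n=1\) and \(A=(t)\) for \(t\ge0\), condition (3) forces \(f(t)\ge0\). With \(n=2\) and \(x=(s,t)\), where \(0\le s\le t\), the matrix \(f[A_{\min}(x)]=A_{\min}(f(s),f(t))\) lies in \(\mathcal C_{\min,2}\). Theorem~\ref{thm:min-psd-tn} then gives \(f(s)\le f(t)\), so \(f\) is nondecreasing. Alternatively, I can read this off from the \(2\times2\) determinant \(f(s)(f(t)-f(s))\ge0\), together with the diagonal entries, if \(f(s)>0\). The route through Theorem~\ref{thm:min-psd-tn} avoids the case \(f(s)=0\), so I will use that one. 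No continuity is needed anywhere.

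For the ``equivalently'' clauses on the Max cones, I will use the reversal identity \eqref{eq:max-min-reversal}. It gives \(J_nf[A_{\max}(x)]J_n=f[J_nA_{\max}(x)J_n]=f[A_{\min}(y)]\), with \(y\) the reversed sequence, and \(x\mapsto y\) is a bijection between the generating sequences of \(\mathcal C_{\max,n}\) and \(\mathcal C_{\min,n}\). Conjugation by \(J_n\) preserves positive semidefiniteness. As noted after \eqref{eq:max-min-reversal}, it also preserves TN and maps \(\mathcal C_{\max,n}\) onto \(\mathcal C_{\min,n}\). So each Max statement is equivalent to the corresponding Min statement.

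I do not expect a genuine obstacle. The only points needing care are the boundary cases \(f(s)=0\) in the necessity argument, which the cone characterization handles cleanly, and the remark that entrywise \(f\) commutes with simultaneous permutation.
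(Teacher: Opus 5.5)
Your proposal is correct and follows essentially the same route as the paper: the identity \(f[A_{\min}(x)]=A_{\min}(f(x_1),\ldots,f(x_n))\) reduces all three matrix conditions to Theorem~\ref{thm:min-psd-tn}, necessity comes from the \(2\times2\) test matrix \(A_{\min}(s,t)\), and the Max case follows by simultaneous reversal. The differences are immaterial: your separate \(n=1\) test is redundant, since the \(2\times2\) case already gives \(0\le f(s)\le f(t)\) for all \(0\le s\le t\); and the paper derives necessity from condition~1 rather than condition~3, which makes no difference once the three are equivalent.
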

	
	\begin{proof}
		If \(A=A_{\min}(x)\), then
		\[
		f[A]=A_{\min}(f(x_1),\ldots,f(x_n)).
		\]
		By Theorem~\ref{thm:min-psd-tn}, this matrix satisfies \(f[A]\succeq0\),
		is totally nonnegative, or belongs to \(\mathcal C_{\min,n}\), if and only if
		\[
		0\le f(x_1)\le f(x_2)\le\cdots\le f(x_n).
		\]
		Thus if \(f\ge0\) and \(f\) is nondecreasing, then
		\(f[A]\in\mathcal C_{\min,n}\) for every \(A\in\mathcal C_{\min,n}\). This
		proves the sufficiency and the equivalence of the first three conditions
		under condition \(4\).
		
		Conversely, assume condition \(1\). Let \(0\le s\le t\). Then
		\[
		A_{\min}(s,t)=
		\begin{pmatrix}
			s&s\\
			s&t
		\end{pmatrix}
		\in\mathcal C_{\min,2}.
		\]
		Hence
		\[
		f[A_{\min}(s,t)]
		=
		A_{\min}(f(s),f(t))
		\]
		satisfies \(A_{\min}(f(s),f(t))\succeq0\). By
		Theorem~\ref{thm:min-psd-tn}, applied in dimension \(2\), this implies
		\[
		0\le f(s)\le f(t).
		\]
		Since \(0\le s\le t\) was arbitrary, \(f\ge0\) and \(f\) is nondecreasing
		on \([0,\infty)\). Thus condition \(4\) holds. The Max equivalences follow
		by simultaneous reversal of rows and columns.
	\end{proof}
	
	\begin{remark}
		In Theorem~\ref{thm:entrywise-preserver}, the assumption over all dimensions may
		be replaced by the assumption in any fixed dimension \(n\ge2\). The case
		\(n=2\) already yields the required nonnegativity and monotonicity.
	\end{remark}
	
	\begin{remark}
		Theorem~\ref{thm:entrywise-preserver} allows discontinuous preservers.
		For instance, nonnegative nondecreasing step functions preserve positive
		semidefiniteness and total nonnegativity on all Min and Max cones. This is
		in contrast with the classical full positive semidefinite cone, where
		dimension-free entrywise preservers are subject to absolute
		monotonicity-type restrictions; see, for example,
		\cite{Schoenberg,ChristensenRessel}.
	\end{remark}
	
	\section{Strict cones, inverse \texorpdfstring{\(M\)}{M}-matrices and oscillatory matrices}
	\label{sec:strict}

	\subsection{Structural equivalences}
	
	\begin{definition}
		We call the relative interiors of the Min and Max cones the strict Min and
		Max cones. Explicitly,
		\[
		\mathcal C_{\min,n}^{\circ}
		=
		\{A_{\min}(x):0<x_1<x_2<\cdots<x_n\},
		\]
		and
		\[
		\mathcal C_{\max,n}^{\circ}
		=
		\{A_{\max}(x):x_1>x_2>\cdots>x_n>0\}.
		\]
		These are relative interiors in their natural linear spans.
	\end{definition}
	
	\begin{definition}
		A \(Z\)-matrix is a real matrix whose off-diagonal entries are nonpositive.
		A nonsingular \(M\)-matrix is a nonsingular \(Z\)-matrix whose inverse is
		entrywise nonnegative. Equivalently, a nonsingular \(M\)-matrix can be
		written as
		\[
		sI-B,
		\qquad
		B\ge_{\rm e}0,
		\qquad
		s>\rho(B).
		\]
		A matrix \(A\) is called a nonsingular inverse \(M\)-matrix if \(A^{-1}\)
		is a nonsingular \(M\)-matrix.
	\end{definition}
	
	We shall use the standard fact that a symmetric \(Z\)-matrix is a
	nonsingular \(M\)-matrix if and only if it is positive definite; see
	\cite{BermanPlemmons}.
	
	\begin{definition}
		A matrix is totally positive, abbreviated TP, if all minors of all orders
		are positive. A totally nonnegative matrix \(A\) is called oscillatory if
		there exists a positive integer \(m\) such that \(A^m\) is totally positive.
	\end{definition}
	
	We shall use the classical criterion that a nonsingular totally
	nonnegative matrix whose first superdiagonal and first subdiagonal entries
	are positive is oscillatory; see
	Gantmacher--Krein~\cite{GantmacherKrein} and
	Pinkus~\cite[Theorem 5.2]{Pinkus}. The same theorem implies that the
	\((n-1)\)-st power of every \(n\times n\) oscillatory matrix is totally
	positive.
	
	If
	\[
	0<x_1<x_2<\cdots<x_n,
	\]
	then all increments \(\Delta_1=x_1\) and
	\(\Delta_k=x_k-x_{k-1}\) for \(k=2,\ldots,n\) are positive. Hence
	\(A_{\min}(x)\) is nonsingular, and
	\[
	A_{\min}(x)^{-1}
	=
	L_n^{-T}D_x^{-1}L_n^{-1}.
	\]
	Writing \(q_k=1/\Delta_k>0\) for \(k=1,\ldots,n\), one obtains, for
	\(n\ge2\),
	\begin{equation}\label{eq:min-inverse}
		A_{\min}(x)^{-1}
		=
		\begin{pmatrix}
			q_1+q_2 & -q_2 & 0 & \cdots & 0\\
			-q_2 & q_2+q_3 & -q_3 & \cdots & 0\\
			0 & -q_3 & q_3+q_4 & \cdots & 0\\
			\vdots & \vdots & \vdots & \ddots & -q_n\\
			0 & 0 & 0 & -q_n & q_n
		\end{pmatrix}.
	\end{equation}
	For \(n=1\), the inverse is simply \([q_1]\). For \(n\ge2\), when
	\(0<x_1<\cdots<x_n\), all \(q_k\) are positive, and hence
	\(A_{\min}(x)^{-1}\) is a symmetric irreducible tridiagonal \(Z\)-matrix.
	In all cases, since \(A_{\min}(x)\succ0\), also
	\(A_{\min}(x)^{-1}\succ0\). Thus \(A_{\min}(x)^{-1}\) is a nonsingular
	\(M\)-matrix; for \(n\ge2\), it is a symmetric irreducible tridiagonal
	nonsingular \(M\)-matrix.
	
	The next theorem collects, in the present notation, the classical
	positivity consequences of the Min factorization and the oscillatory
	criterion. It is included to identify the relative interior on which the
	strict entrywise preserver problem is considered.
	
	\begin{theorem}\label{thm:strict-equivalences}
		For a Min matrix
		\[
		A_{\min}(x)=\bigl(x_{\min(i,j)}\bigr)_{i,j=1}^n,
		\]
		the following are equivalent:
		\begin{enumerate}
			\item \(0<x_1<x_2<\cdots<x_n\).
			\item \(A_{\min}(x)\succ0\).
			\item \(A_{\min}(x)\) is nonsingular and totally nonnegative.
			\item \(A_{\min}(x)\) is a nonsingular inverse \(M\)-matrix.
			\item \(A_{\min}(x)\) is oscillatory.
		\end{enumerate}
		The corresponding five conditions for a Max matrix are equivalent after
		replacing the first condition by
		\(x_1>x_2>\cdots>x_n>0\) and replacing \(A_{\min}\) by \(A_{\max}\).
	\end{theorem}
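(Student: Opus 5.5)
The plan is to show that condition 1 implies each of the other four, and that each of those four returns to condition 1. The Max case then follows by simultaneous reversal via \eqref{eq:max-min-reversal}. Reversal preserves positive definiteness, nonsingularity and total nonnegativity, since both permutations contribute the same sign to every minor. It preserves the inverse \(M\)-matrix property because \(J_nA^{-1}J_n\) is again a \(Z\)-matrix with nonnegative inverse. It preserves oscillation because \((J_nAJ_n)^m=J_nA^mJ_n\).

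\emph{1 \(\Rightarrow\) 2,3,4,5.} Assume \(0<x_1<\cdots<x_n\), so every \(\Delta_k>0\). The factorization \eqref{eq:min-factorization} gives \(A_{\min}(x)\succ0\), because \(L_n\) is invertible and \(D_x\succ0\). Theorem~\ref{thm:min-psd-tn} gives total nonnegativity, and \(\det A_{\min}(x)=\prod_k\Delta_k>0\) gives nonsingularity. The discussion after \eqref{eq:min-inverse} already shows that \(A_{\min}(x)^{-1}\) is a positive definite symmetric \(Z\)-matrix, hence a nonsingular \(M\)-matrix; this is condition 4.

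For condition 5, I apply the Gantmacher--Krein criterion. \(A_{\min}(x)\) is nonsingular and TN, and its first super- and subdiagonal entries are \(x_{\min(i,i+1)}=x_i>0\). Hence it is oscillatory.

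\emph{Converses.} For 2 \(\Rightarrow\) 1, note that \(D_x=L_n^{-1}A_{\min}(x)L_n^{-T}\succ0\), so every \(\Delta_k>0\).

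For 3 \(\Rightarrow\) 1, Theorem~\ref{thm:min-psd-tn} gives \(\Delta_k\ge0\). Nonsingularity gives \(\prod\Delta_k\neq0\), so every \(\Delta_k>0\).

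For 5 \(\Rightarrow\) 3, an oscillatory matrix is TN by definition. It is nonsingular because \(A^m\) is TP, so \(\det A^m=(\det A)^m>0\).

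For 4 \(\Rightarrow\) 2, I need the most care, since a nonsymmetric-looking hypothesis has to be turned into positive definiteness. Here \(A=A_{\min}(x)\) is symmetric, so \(A^{-1}\) is a symmetric \(Z\)-matrix that is a nonsingular \(M\)-matrix. By the cited standard fact it is positive definite, and therefore so is \(A\).

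These implications close the cycle: 1 implies all of 2--5, while 5 \(\Rightarrow\) 3 \(\Rightarrow\) 1, 4 \(\Rightarrow\) 2 \(\Rightarrow\) 1. The case \(n=1\) is trivial and fits the same argument, since an oscillatory \(1\times1\) matrix is simply a positive number. The only genuinely external inputs are the symmetric \(Z\)-matrix fact and the oscillation criterion, both of which the paper quotes. The main step to check is therefore that the criterion's hypotheses hold exactly: nonsingularity together with positivity of the adjacent off-diagonal entries.
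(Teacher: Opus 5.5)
Your proposal is correct and matches the paper's proof essentially step for step. The paper uses the same ingredients: the factorization for $1\Leftrightarrow 2$, Theorem~\ref{thm:min-psd-tn} with the determinant formula for $1\Leftrightarrow 3$, the tridiagonal inverse and the symmetric $Z$-matrix fact for $1\Rightarrow 4\Rightarrow 2$, the Gantmacher--Krein criterion for $1\Rightarrow 5$ together with $5\Rightarrow 3$, and simultaneous reversal for the Max case. You also spell out two points the paper only asserts: that an oscillatory matrix is nonsingular because $(\det A)^m=\det A^m>0$, and that reversal preserves the inverse $M$-matrix and oscillation properties.
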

	
	\begin{proof}
		The equivalence between \(1\) and \(2\) follows from
		\eqref{eq:min-factorization}. Since \(L_n\) is nonsingular,
		\(A_{\min}(x)\succ0\) if and only if \(D_x\succ0\). This is equivalent to
		\[
		\Delta_k>0,
		\qquad k=1,\ldots,n,
		\]
		that is,
		\[
		0<x_1<x_2<\cdots<x_n.
		\]
		
		By Theorem~\ref{thm:min-psd-tn}, total nonnegativity of
		\(A_{\min}(x)\) is equivalent to
		\[
		0\le x_1\le x_2\le\cdots\le x_n.
		\]
		Moreover,
		\[
		\det A_{\min}(x)
		=
		x_1\prod_{k=2}^n (x_k-x_{k-1}).
		\]
		Therefore \(A_{\min}(x)\) is nonsingular and totally nonnegative if and
		only if
		\[
		0<x_1<x_2<\cdots<x_n.
		\]
		Thus \(1\) and \(3\) are equivalent.
		
		Assume now that \(1\) holds. By \eqref{eq:min-inverse},
		\(A_{\min}(x)^{-1}\) is a symmetric \(Z\)-matrix for \(n\ge2\), while the
		case \(n=1\) is trivial. Since \(A_{\min}(x)\succ0\), also
		\(A_{\min}(x)^{-1}\succ0\). Hence \(A_{\min}(x)^{-1}\) is a nonsingular
		\(M\)-matrix, and \(A_{\min}(x)\) is a nonsingular inverse \(M\)-matrix.
		Thus \(1\) implies condition \(4\).
		
		Conversely, assume condition \(4\). Then \(A_{\min}(x)^{-1}\) is a
		nonsingular \(M\)-matrix. Since \(A_{\min}(x)\) is symmetric, its inverse is
		also symmetric. Hence \(A_{\min}(x)^{-1}\succ0\), and therefore
		\(A_{\min}(x)\succ0\). Thus condition \(2\) holds, and consequently
		condition \(1\) follows.
		
		Finally, suppose \(1\) holds. Then \(A_{\min}(x)\) is nonsingular and
		totally nonnegative. Moreover,
		\[
		(A_{\min}(x))_{i,i+1}=x_i>0,
		\qquad
		(A_{\min}(x))_{i+1,i}=x_i>0,
		\qquad i=1,\ldots,n-1.
		\]
		By the oscillatory matrix criterion recalled above, \(A_{\min}(x)\) is
		oscillatory. Conversely, if \(A_{\min}(x)\) is oscillatory, then it is
		nonsingular and totally nonnegative. Hence condition \(3\) holds.

		For the Max case, put \(y_k=x_{n+1-k}\). Then
		\(J_nA_{\max}(x)J_n=A_{\min}(y)\). Positive definiteness,
		nonsingularity, total nonnegativity, the inverse \(M\)-matrix property,
		and oscillation are all preserved under simultaneous reversal. The Max
		equivalences therefore follow from the Min case.
	\end{proof}
	
	\begin{remark}
		In the Min and Max classes, positive definiteness is equivalent to
		nonsingular total nonnegativity, but the latter cannot be strengthened to
		total positivity when \(n\ge3\).
		
		Indeed, for a Min matrix, the minor determined by rows \(1,2\) and columns
		\(2,3\) is
		\[
		\det
		\begin{pmatrix}
			x_1&x_1\\
			x_2&x_2
		\end{pmatrix}
		=0.
		\]
		Thus no Min matrix of order \(n\ge3\) is totally positive. Similarly, for a
		Max matrix, the minor determined by rows \(2,3\) and columns \(1,2\) is
		\[
		\det
		\begin{pmatrix}
			x_2&x_2\\
			x_3&x_3
		\end{pmatrix}
		=0.
		\]
		Thus no Max matrix of order \(n\ge3\) is totally positive.
	\end{remark}
	
	\subsection{Entrywise preservers of the strict classes}
	
	\begin{proposition}\label{prop:strict-preserver}
		Let \(f:(0,\infty)\to\mathbb R\). The following are equivalent:
		\begin{enumerate}
			\item For every \(n\ge1\) and every
			\(A\in\mathcal C_{\min,n}^{\circ}\), one has
			\(f[A]\in\mathcal C_{\min,n}^{\circ}\).
			\item For every \(n\ge1\) and every
			\(A\in\mathcal C_{\min,n}^{\circ}\), the matrix \(f[A]\) is a
			nonsingular inverse \(M\)-matrix.
			\item \(f(t)>0\) for all \(t>0\), and \(f\) is strictly increasing on
			\((0,\infty)\).
		\end{enumerate}
		The same equivalence holds with \(\mathcal C_{\min,n}^{\circ}\) replaced by
		\(\mathcal C_{\max,n}^{\circ}\).
	\end{proposition}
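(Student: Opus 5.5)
The plan mirrors the proof of Theorem~\ref{thm:entrywise-preserver}. Everything rests on the identity
\[
f[A_{\min}(x)]=A_{\min}(f(x_1),\ldots,f(x_n)).
\]
Combined with Theorem~\ref{thm:strict-equivalences}, it reduces each matrix condition to a condition on the sequence \((f(x_1),\ldots,f(x_n))\). Indeed, \(f[A]\in\mathcal C_{\min,n}^{\circ}\) means, by definition of the strict cone, that this sequence is positive and strictly increasing. By the equivalence of conditions \(1\) and \(4\) in Theorem~\ref{thm:strict-equivalences}, applied to the Min matrix generated by \((f(x_1),\ldots,f(x_n))\), this holds if and only if \(f[A]\) is a nonsingular inverse \(M\)-matrix. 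This gives \(1\Leftrightarrow 2\) directly, dimension by dimension.

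For \(3\Rightarrow1\), take \(A=A_{\min}(x)\in\mathcal C_{\min,n}^{\circ}\), so that \(0<x_1<\cdots<x_n\). If \(f\) is positive and strictly increasing, then \(0<f(x_1)<\cdots<f(x_n)\), and hence \(f[A]\in\mathcal C_{\min,n}^{\circ}\).

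For \(1\Rightarrow3\), I would use the smallest test matrices.
\begin{itemize}
\item With \(n=1\): for any \(t>0\), the matrix \([t]\) lies in \(\mathcal C_{\min,1}^{\circ}\), so \([f(t)]\in\mathcal C_{\min,1}^{\circ}\), which gives \(f(t)>0\).
\item With \(n=2\): for any \(0<s<t\), the matrix \(A_{\min}(s,t)\in\mathcal C_{\min,2}^{\circ}\), so \(A_{\min}(f(s),f(t))\in\mathcal C_{\min,2}^{\circ}\). This gives \(0<f(s)<f(t)\), i.e.\ strict monotonicity.
\end{itemize}
One point of care is that the \(2\times2\) test alone already yields positivity of \(f(s)\) for every \(s>0\), since any \(s>0\) can be paired with some \(t>s\). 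So the \(1\times1\) case is not strictly needed, though it is harmless.

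For the Max statement, I would use the reversal identity \eqref{eq:max-min-reversal}. Set \(y_k=x_{n+1-k}\); since \(f\) acts entrywise, \(J_nf[A_{\max}(x)]J_n=f[A_{\min}(y)]\). Moreover, \(x\mapsto y\) is a bijection between the generating sequences of \(\mathcal C_{\max,n}^{\circ}\) and those of \(\mathcal C_{\min,n}^{\circ}\). The inverse \(M\)-matrix property is also invariant under simultaneous reversal, because \(J_nA^{-1}J_n=(J_nAJ_n)^{-1}\) and permutation similarity preserves \(Z\)-matrices and entrywise nonnegativity of inverses. Hence each Max condition is equivalent to the corresponding Min condition.

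No step is a genuine obstacle. The only thing to verify carefully is that condition \(2\) uses the equivalence \(1\Leftrightarrow4\) of Theorem~\ref{thm:strict-equivalences} for the image sequence rather than for \(x\), which is legitimate because \(f[A]\) is again a Min matrix.
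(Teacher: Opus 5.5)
Your proposal is correct and follows essentially the same route as the paper. Both arguments reduce via $f[A_{\min}(x)]=A_{\min}(f(x_1),\ldots,f(x_n))$ and Theorem~\ref{thm:strict-equivalences}, use the $2\times2$ test matrix $A_{\min}(s,t)$ for necessity, and obtain the Max case by simultaneous reversal; your dimension-by-dimension proof of $1\Leftrightarrow2$ is only a slightly tidier arrangement of the paper's $3\Rightarrow2$ and $2\Rightarrow1$ steps.
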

	
	\begin{proof}
		Assume first that \(f(t)>0\) for all \(t>0\) and that \(f\) is strictly
		increasing. If \(A=A_{\min}(x)\in\mathcal C_{\min,n}^{\circ}\), then
		\[
		0<x_1<x_2<\cdots<x_n.
		\]
		Therefore
		\[
		0<f(x_1)<f(x_2)<\cdots<f(x_n),
		\]
		and hence
		\[
		f[A]=A_{\min}(f(x_1),\ldots,f(x_n))\in\mathcal C_{\min,n}^{\circ}.
		\]
		By Theorem~\ref{thm:strict-equivalences}, \(f[A]\) is a nonsingular
		inverse \(M\)-matrix.
		
		Conversely, suppose condition \(1\) holds. Let \(0<s<t\), and consider
		\[
		A=
		\begin{pmatrix}
			s&s\\
			s&t
		\end{pmatrix}
		=
		A_{\min}(s,t).
		\]
		Then \(A\in\mathcal C_{\min,2}^{\circ}\). Hence
		\[
		f[A]
		=
		\begin{pmatrix}
			f(s)&f(s)\\
			f(s)&f(t)
		\end{pmatrix}
		\in\mathcal C_{\min,2}^{\circ}.
		\]
		Thus
		\[
		0<f(s)<f(t)
		\]
		whenever \(0<s<t\). Hence \(f(t)>0\) for all \(t>0\), and \(f\) is
		strictly increasing. Since \(f[A]\) is again a Min matrix, condition \(2\),
		together with Theorem~\ref{thm:strict-equivalences}, implies condition
		\(1\). Thus the Min case is proved.
		
		The Max case follows by applying the same argument after simultaneous
		reversal of rows and columns, or equivalently by using
		Theorem~\ref{thm:strict-equivalences}.
	\end{proof}
	
\section{Loewner order preservers, convexity and automorphisms}
\label{sec:loewner}

	Here the Loewner order is the positive semidefinite order: for symmetric
	matrices \(A\) and \(B\), \(A\preceq B\) means \(B-A\succeq0\).
	
	For Min matrices, Loewner order has a simple description. If
	\(A_{\min}(x),A_{\min}(y)\in\mathcal C_{\min,n}\), then
	\[
	A_{\min}(x)\preceq A_{\min}(y)
	\]
	if and only if
	\[
	A_{\min}(y-x)\succeq0.
	\]
	By Theorem~\ref{thm:min-psd-tn}, this is equivalent to
	\[
	0\le y_1-x_1\le y_2-x_2\le\cdots\le y_n-x_n.
	\]
	
	\begin{definition}\label{def:increasing-increments}
		Let \(f:[0,\infty)\to\mathbb R\), and set
		\begin{equation}\label{eq:increment-function}
			G_f(s,d)=f(s+d)-f(s),
			\qquad s,d\ge0.
		\end{equation}
		We say that \(f\) has increasing increments if \(G_f\) is nondecreasing in
		each variable separately; that is,
		\[
		G_f(s,d)\le G_f(t,d)
		\quad\text{whenever }0\le s\le t,\ d\ge0,
		\]
		and
		\[
		G_f(s,d)\le G_f(s,e)
		\quad\text{whenever }s\ge0,\ 0\le d\le e.
		\]
		Equivalently,
		\[
		f(s+d)-f(s)
		\le
		f(t+e)-f(t)
		\]
		whenever \(0\le s\le t\) and \(0\le d\le e\).
	\end{definition}

	\begin{theorem}\label{thm:loewner-preserver}
		Let \(f:[0,\infty)\to\mathbb R\). The following are equivalent:
		\begin{enumerate}
			\item For every \(n\ge1\) and all \(A,B\in\mathcal C_{\min,n}\),
			\[
			A\preceq B
			\quad\Longrightarrow\quad
			f[A]\preceq f[B].
			\]
			Equivalently, the same implication holds on every Max cone.
			\item \(f\) has increasing increments on \([0,\infty)\).
			\item \(f\) is nondecreasing and convex on \([0,\infty)\).
		\end{enumerate}
	\end{theorem}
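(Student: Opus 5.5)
We prove the cycle \(1\Rightarrow2\Rightarrow3\Rightarrow2\Rightarrow1\), working on the Min cone and transferring everything to the Max cone by the simultaneous reversal \eqref{eq:max-min-reversal}, exactly as in Theorem~\ref{thm:entrywise-preserver}. The basic tool is the description of the Loewner order recorded just before Definition~\ref{def:increasing-increments}: for \(A=A_{\min}(x)\) and \(B=A_{\min}(y)\) in \(\mathcal C_{\min,n}\), we have \(A\preceq B\) if and only if \(0\le y_1-x_1\le\cdots\le y_n-x_n\). Since \(f[A_{\min}(x)]=A_{\min}(f(x))\), and the characterization \(A_{\min}(z)\succeq0\iff 0\le z_1\le\cdots\le z_n\) of Theorem~\ref{thm:min-psd-tn} holds for arbitrary real \(z\), we get \(f[A]\preceq f[B]\) if and only if
\[
0\le f(y_1)-f(x_1)\le f(y_2)-f(x_2)\le\cdots\le f(y_n)-f(x_n).
\]
Condition~1 is therefore a statement about the function \(G_f\) evaluated along admissible pairs of sequences.

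For \(2\Rightarrow1\), write \(d_k=y_k-x_k\). Then \(0\le d_1\le\cdots\le d_n\) and \(x_1\le\cdots\le x_n\), so \(f(y_k)-f(x_k)=G_f(x_k,d_k)\). Increasing increments give \(G_f(x_k,d_k)\le G_f(x_{k+1},d_{k+1})\). For nonnegativity, \(G_f(x_1,d_1)\ge G_f(x_1,0)=0\) by monotonicity in the second variable. For \(1\Rightarrow2\), we use \(n=2\). Given \(0\le s\le t\) and \(0\le d\le e\), take \(x=(s,t)\) and \(y=(s+d,t+e)\). Then \(x\) and \(y\) are nondecreasing and nonnegative, and \(0\le d\le e\), so \(A_{\min}(x)\preceq A_{\min}(y)\). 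Condition~1 then yields \(f(s+d)-f(s)\le f(t+e)-f(t)\), which is the combined form in Definition~\ref{def:increasing-increments}. This shows the property is already detected in dimension two.

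For \(2\Rightarrow3\), taking \(s=t\) and \(d=0\) gives \(0=G_f(s,0)\le G_f(s,e)\), so \(f\) is nondecreasing. Monotonicity of \(G_f(\cdot,d)\) in \(s\) with \(d=h\) fixed gives \(f(a+h)-f(a)\le f(a+2h)-f(a+h)\), which is midpoint convexity \(f(a+h)\le\tfrac12\bigl(f(a)+f(a+2h)\bigr)\).

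\textbf{Main obstacle.} No continuity is assumed, so we must upgrade midpoint convexity to full convexity. The plan is the standard Sierpiński/Bernstein--Doetsch argument: a midpoint-convex function that is bounded above on some interval is convex. Here \(f\) is nondecreasing, so on any \([a,b]\) it is bounded above by \(f(b)\). Midpoint convexity then yields convexity on \((0,\infty)\), hence continuity there. At the endpoint \(0\), convexity of the inequality \(f(\lambda u)\le\lambda f(u)+(1-\lambda)f(0)\) still needs to be checked. We expect to obtain it by letting \(v\downarrow0\) in the convexity inequality on \((0,\infty)\) and using \(f(0)\le f(v)\), which follows from monotonicity. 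Since \(f(0)\le\lim_{v\downarrow0}f(v)\), any jump at \(0\) only helps convexity.

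For \(3\Rightarrow2\), monotonicity of \(G_f(s,\cdot)\) is immediate from \(f\) being nondecreasing. Monotonicity in \(s\) is the standard slope inequality for convex functions: \(f(s+d)-f(s)\le f(t+d)-f(t)\) for \(s\le t\), obtained by comparing secant slopes over intervals of equal length \(d\) with \(s\le t\) and \(s+d\le t+d\).
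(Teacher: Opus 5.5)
Your arguments for $1\Leftrightarrow2$ and $3\Rightarrow2$ match the paper's. So does the interior part of $2\Rightarrow3$, where you combine monotonicity and midpoint convexity with a Bernstein--Doetsch-type result to get convexity and continuity on $(0,\infty)$. The gap is at the endpoint $0$, where your reasoning runs in the wrong direction.

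Put $L=\lim_{v\downarrow0}f(v)$. Letting $v\downarrow0$ in $f(\lambda u+(1-\lambda)v)\le\lambda f(u)+(1-\lambda)f(v)$ gives only $f(\lambda u)\le\lambda f(u)+(1-\lambda)L$. You need the same inequality with $f(0)$ in place of $L$. Since the right-hand side must be smaller, this requires $L\le f(0)$, the \emph{opposite} of the inequality $f(0)\le L$ that monotonicity supplies.

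For a nondecreasing function, a jump at $0$ means exactly $f(0)<L$, and such a jump destroys convexity rather than helping it. Take the function equal to $0$ at $0$ and to $1$ on $(0,\infty)$. It is nondecreasing and convex on $(0,\infty)$, yet $f(\lambda u)=1>\lambda=\lambda f(u)+(1-\lambda)f(0)$ for $u>0$ and $0<\lambda<1$. A convex function can only jump \emph{up} at an endpoint, and a nondecreasing function cannot do that. So monotonicity plus interior convexity is not enough; you must use the hypothesis at $s=0$ once more. Indeed, this example violates increasing increments: $G_f(0,d)=1>0=G_f(s,d)$ for $s,d>0$.

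The repair uses ingredients you already have. Your midpoint inequality holds with $a=0$, namely $f(h)\le\tfrac12\bigl(f(0)+f(2h)\bigr)$. Letting $h\downarrow0$ gives $L\le\tfrac12\bigl(f(0)+L\bigr)$, so $L\le f(0)$. Alternatively, the paper uses $f(d)-f(0)=G_f(0,d)\le G_f(s,d)=f(s+d)-f(s)$ for $s>0$ and lets $s\downarrow0$, using continuity at $d>0$, to get $L\le f(0)$. Either way $L=f(0)$. Your limiting argument then correctly yields convexity on all of $[0,\infty)$.
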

	
	\begin{proof}
		Assume first that \(f\) has increasing increments. Let
		\(A=A_{\min}(x)\) and \(B=A_{\min}(y)\) be elements of
		\(\mathcal C_{\min,n}\) with \(A\preceq B\). Put \(d_k=y_k-x_k\). Then
		\[
		0\le d_1\le d_2\le\cdots\le d_n.
		\]
		Moreover, \(f(y_k)-f(x_k)=G_f(x_k,d_k)\). Since \(G_f(x_k,0)=0\) and
		\(G_f\) is nondecreasing in the second variable, \(G_f(x_k,d_k)\ge0\).
		Since both \(x_k\) and \(d_k\) are nondecreasing in \(k\), and \(G_f\) is
		nondecreasing in each variable, the sequence
		\[
		f(y_k)-f(x_k)
		\]
		is nondecreasing. Hence
		\[
		0\le f(y_1)-f(x_1)\le\cdots\le f(y_n)-f(x_n).
		\]
		Therefore
		\[
		f[B]-f[A]
		=
		A_{\min}(f(y)-f(x))
		\succeq0.
		\]
		Thus \(f[A]\preceq f[B]\).
		
		Conversely, assume \(f\) preserves Loewner order on all Min cones. Let
		\[
		0\le s\le t,
		\qquad
		0\le d\le e.
		\]
		Consider
		\[
		A=A_{\min}(s,t),
		\qquad
		B=A_{\min}(s+d,t+e).
		\]
		Then
		\[
		B-A=A_{\min}(d,e)\succeq0,
		\]
		and hence \(A\preceq B\). Therefore
		\[
		f[B]-f[A]
		=
		A_{\min}(f(s+d)-f(s),\,f(t+e)-f(t))
		\succeq0.
		\]
		By Theorem~\ref{thm:min-psd-tn},
		\[
		0\le f(s+d)-f(s)\le f(t+e)-f(t).
		\]
		This is precisely the increasing-increments condition. The Max equivalence
		follows by reversing the order of indices.

		It remains to identify increasing increments with ordinary monotonicity and
		convexity. Assume first that \(f\) has increasing increments. Then, for every
		\(s,h\ge0\),
		\[
		f(s+h)-f(s)
		\le
		f(s+2h)-f(s+h).
		\]
		Equivalently,
		\[
		f(s+h)
		\le
		\frac{f(s)+f(s+2h)}2.
		\]
		Thus \(f\) is midpoint convex on \([0,\infty)\). Moreover, increasing
		increments imply monotonicity, because by \eqref{eq:increment-function},
		\[
		f(s+d)-f(s)=G_f(s,d)\ge G_f(s,0)=0
		\]
		for all \(s,d\ge0\). Hence \(f\) is nondecreasing and therefore locally
		bounded on \((0,\infty)\). A locally bounded midpoint-convex function is
		continuous and convex on the interior of its domain. Hence \(f\) is
		continuous and convex on \((0,\infty)\).

		It remains to treat the endpoint. By monotonicity, the finite right limit
		\[
		L=\lim_{s\downarrow0}f(s)
		\]
		exists and satisfies \(f(0)\le L\). For any fixed \(d>0\), increasing
		increments give
		\[
		f(d)-f(0)=G_f(0,d)\le G_f(s,d)=f(s+d)-f(s),
		\qquad s>0.
		\]
		Letting \(s\downarrow0\) and using continuity at \(d\) yields
		\[
		f(d)-f(0)\le f(d)-L,
		\]
		so \(L\le f(0)\). Thus \(L=f(0)\), and \(f\) is continuous at zero.
		Consequently, midpoint convexity implies convexity on all of
		\([0,\infty)\).
		
		Conversely, assume that \(f\) is nondecreasing and convex. For each fixed
		\(d\ge0\), convexity implies that
		\[
		s\mapsto f(s+d)-f(s)
		\]
		is nondecreasing on \([0,\infty)\). For each fixed \(s\ge0\), the map
		\[
		d\mapsto f(s+d)-f(s)
		\]
		is nondecreasing because \(f\) is nondecreasing. Therefore \(G_f(s,d)\) is
		nondecreasing in each variable separately, and \(f\) has increasing
		increments.
	\end{proof}

We next consider convexity of the induced entrywise map with respect to the
Loewner order. Requiring the convexity inequality for arbitrary pairs in the
cone is strictly stronger than the standard entrywise Loewner-convexity
convention, which imposes it on Loewner-comparable pairs; see
\cite{Hiai2009,GuillotKhareRajaratnam}. Hiai
\cite[Proposition~1.2(2)]{Hiai2009} showed that arbitrary-pair convexity on
the full positive semidefinite class forces an affine function. We first show
that the same rigidity is already present on the much smaller Min and Max
cones.

\begin{definition}\label{def:all-pairs-loewner-convexity}
	Let \(f:[0,\infty)\to\mathbb R\). The entrywise map induced by \(f\) is
	called Loewner convex on arbitrary pairs in \(\mathcal C_{\min,n}\) if
	\[
	f[\lambda A+(1-\lambda)B]
	\preceq
	\lambda f[A]+(1-\lambda)f[B]
	\]
	for all \(A,B\in\mathcal C_{\min,n}\) and \(0\le\lambda\le1\). It is
	called Loewner concave on arbitrary pairs if the reverse inequality holds. The
	corresponding notions on \(\mathcal C_{\max,n}\) are defined analogously.
\end{definition}

\begin{proposition}\label{prop:all-pairs-rigidity}
	Let \(f:[0,\infty)\to\mathbb R\). The following are equivalent:
	\begin{enumerate}
		\item The entrywise map induced by \(f\) is Loewner convex on arbitrary
		pairs in every Min cone.
		\item The entrywise map induced by \(f\) is Loewner concave on arbitrary
		pairs in every Min cone.
		\item The function \(f\) is affine on \([0,\infty)\).
	\end{enumerate}
	The same equivalence holds for the Max cones. In the first two assertions,
	dimension \(2\) is sufficient.
\end{proposition}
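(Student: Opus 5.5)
The plan is to reduce everything to conditions on generating sequences, exactly as in the proof of Theorem~\ref{thm:loewner-preserver}. The key structural fact is linearity of the parametrization. For \(A=A_{\min}(x)\), \(B=A_{\min}(y)\) in \(\mathcal C_{\min,n}\) we have \(\lambda A+(1-\lambda)B=A_{\min}(\lambda x+(1-\lambda)y)\). Hence the convexity defect is itself a Min matrix:
\[
\lambda f[A]+(1-\lambda)f[B]-f[\lambda A+(1-\lambda)B]=A_{\min}(g),
\qquad
g_k=\lambda f(x_k)+(1-\lambda)f(y_k)-f(\lambda x_k+(1-\lambda)y_k).
\]
By Theorem~\ref{thm:min-psd-tn}, Loewner convexity on this pair is equivalent to \(0\le g_1\le\cdots\le g_n\). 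For concavity, the condition is \(0\le -g_1\le\cdots\le -g_n\).

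The implication \(3\Rightarrow1,2\) is immediate. If \(f\) is affine, then every \(g_k=0\), so the defect matrix is \(0\), which is both \(\succeq0\) and \(\preceq0\).

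For \(1\Rightarrow3\), I work in dimension \(2\). Take \(x=(s,s)\) and \(y=(0,t)\) with \(0\le s\le t\); both are nonnegative and nondecreasing, so \(A_{\min}(x),A_{\min}(y)\in\mathcal C_{\min,2}\). The condition \(g_2\ge g_1\) compares the Jensen gap of \(f\) on \(\{s,t\}\) with the Jensen gap on \(\{s,0\}\). I will instead choose the pairs to exploit the fact that the first coordinate can be frozen. Take \(x=(0,a)\) and \(y=(0,b)\) with arbitrary \(a,b\ge0\). Then \(g_1=\lambda f(0)+(1-\lambda)f(0)-f(0)=0\), so the condition reads \(g_2\ge0\). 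This gives ordinary convexity of \(f\) on \([0,\infty)\). To get the reverse inequality, freeze the \emph{last} coordinate, which forces \(g_1\le g_2=0\). Take \(x=(a,c)\) and \(y=(b,c)\) with \(c\ge\max(a,b)\). Then \(g_2=0\), so \(g_1\le0\). Combined with \(g_1\ge0\), this gives \(g_1=0\). Thus \(f(\lambda a+(1-\lambda)b)=\lambda f(a)+(1-\lambda)f(b)\) for all \(a,b\ge0\), and \(f\) is affine. No continuity assumption is needed, since the Jensen identity holds for every \(\lambda\in[0,1]\) and every pair of points.

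For \(2\Rightarrow3\) I use the symmetric argument. Concavity requires \(0\le -g_1\le -g_2\). With \(x=(a,c)\) and \(y=(b,c)\), freezing the last coordinate gives \(-g_1\le0\), and the constraint \(-g_1\ge0\) then forces \(g_1=0\), so \(f\) is again affine.

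For the Max cones I apply simultaneous reversal via \eqref{eq:max-min-reversal}, which transports Loewner order and convex combinations. All test pairs above lie in dimension \(2\), which justifies the last sentence of the statement.

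The only step needing care is choosing test pairs for which one coordinate of the defect sequence vanishes. This makes the monotonicity constraint \(g_1\le g_2\) collide with the nonnegativity constraint \(g_1\ge0\), and that collision forces equality in Jensen's inequality. It is the heart of the argument.
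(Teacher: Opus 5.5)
Your proof is correct and is essentially the paper's argument. The paper uses the pair \(A_{\min}(s,t)\), \(A_{\min}(t,t)\), which is your frozen-last-coordinate pair \(x=(a,c)\), \(y=(b,c)\) in the special case \(b=c\). In both versions the defect is \(A_{\min}(g,0)\), and Theorem~\ref{thm:min-psd-tn} forces \(0\le g\le 0\) for convexity, or \(0\le -g\le 0\) for concavity. Your preliminary pair \(x=(0,a)\), \(y=(0,b)\) is harmless but redundant, since the frozen-last-coordinate pair alone already gives both \(g_1\ge0\) and \(g_1\le0\).
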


\begin{proof}
	Every affine function satisfies both inequalities with equality. Conversely,
	let \(0\le s<t\), and set
	\[
	A=A_{\min}(s,t),
	\qquad
	B=A_{\min}(t,t).
	\]
	For \(0\le\lambda\le1\), put
	\[
	g=\lambda f(s)+(1-\lambda)f(t)
	-f(\lambda s+(1-\lambda)t).
	\]
	A direct calculation gives
	\[
	\lambda f[A]+(1-\lambda)f[B]
	-f[\lambda A+(1-\lambda)B]
	=A_{\min}(g,0).
	\]
	Loewner convexity on arbitrary pairs implies \(A_{\min}(g,0)\succeq0\),
	while Loewner concavity on arbitrary pairs implies
	\(A_{\min}(-g,0)\succeq0\). By
	Theorem~\ref{thm:min-psd-tn}, either condition forces \(g=0\). Hence
	\[
	f(\lambda s+(1-\lambda)t)
	=\lambda f(s)+(1-\lambda)f(t)
	\]
	for all \(0\le s<t\) and \(0\le\lambda\le1\), so \(f\) is affine. The Max
	case follows by simultaneous reversal of rows and columns.
\end{proof}

We now adopt the standard comparable-pairs convention.

\begin{definition}\label{def:loewner-convexity}
	Let \(f:[0,\infty)\to\mathbb R\). The entrywise map induced by \(f\) is
	called Loewner convex on \(\mathcal C_{\min,n}\) if
	\[
	f[\lambda A+(1-\lambda)B]
	\preceq
	\lambda f[A]+(1-\lambda)f[B]
	\]
	whenever
	\[
	A,B\in\mathcal C_{\min,n},
	\qquad
	B\preceq A,
	\qquad
	0\le\lambda\le1.
	\]
	It is called Loewner concave if the reverse inequality holds. The
	corresponding notions on \(\mathcal C_{\max,n}\) are defined analogously.
\end{definition}

For \(0\le\lambda\le1\), define the Jensen gap
\[
J_{f,\lambda}(s,d)
=
\lambda f(s+d)+(1-\lambda)f(s)-f(s+\lambda d),
\qquad s,d\ge0.
\]

\begin{proposition}\label{prop:jensen-gap-characterization}
	Let \(f:[0,\infty)\to\mathbb R\). Then the following assertions hold.
	\begin{enumerate}
		\item The entrywise map induced by \(f\) is Loewner convex on every
		Min cone if and only if, for every \(0\le\lambda\le1\), the function
		\[
		(s,d)\longmapsto J_{f,\lambda}(s,d)
		\]
		is nondecreasing in each variable separately.
		
		\item The entrywise map induced by \(f\) is Loewner concave on every
		Min cone if and only if, for every \(0\le\lambda\le1\), the function
		\[
		(s,d)\longmapsto J_{f,\lambda}(s,d)
		\]
		is nonincreasing in each variable separately.
	\end{enumerate}
	The same characterizations hold for the Max cones. In each assertion,
	dimension \(2\) is sufficient for the necessity.
\end{proposition}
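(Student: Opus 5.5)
The plan is to reduce everything to the generating sequences, exactly as in the proof of Theorem~\ref{thm:loewner-preserver}. Take $A=A_{\min}(y)$ and $B=A_{\min}(x)$ in $\mathcal C_{\min,n}$ with $B\preceq A$. By the description of Loewner order preceding Definition~\ref{def:increasing-increments}, this means $d_k=y_k-x_k$ satisfies $0\le d_1\le\cdots\le d_n$. Also $0\le x_1\le\cdots\le x_n$.

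Since $\lambda A+(1-\lambda)B=A_{\min}(x+\lambda d)$, and $f[\,\cdot\,]$ commutes with forming Min matrices, we get
\[
\lambda f[A]+(1-\lambda)f[B]-f[\lambda A+(1-\lambda)B]
=A_{\min}\bigl(J_{f,\lambda}(x_1,d_1),\ldots,J_{f,\lambda}(x_n,d_n)\bigr).
\]
By Theorem~\ref{thm:min-psd-tn}, Loewner convexity for this pair is therefore equivalent to
\[
0\le J_{f,\lambda}(x_1,d_1)\le\cdots\le J_{f,\lambda}(x_n,d_n).
\]
Loewner concavity is equivalent to the same chain for $-J_{f,\lambda}$.

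\emph{Sufficiency.} Assume $J_{f,\lambda}$ is nondecreasing in each variable separately. Since $(x_k)$ and $(d_k)$ are both nondecreasing, the sequence $J_{f,\lambda}(x_k,d_k)$ is nondecreasing: move $s$ first, then $d$. Nonnegativity follows from $J_{f,\lambda}(x_1,d_1)\ge J_{f,\lambda}(x_1,0)=0$, because $J_{f,\lambda}(s,0)=\lambda f(s)+(1-\lambda)f(s)-f(s)=0$. The concave case is identical with the reversed monotonicity.

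\emph{Necessity in dimension $2$.} Fix $0\le s\le t$ and $0\le d\le e$, and take $B=A_{\min}(s,t)$ and $A=A_{\min}(s+d,t+e)$. Then $A-B=A_{\min}(d,e)\succeq0$, so $B\preceq A$, and the reduction above gives
\[
0\le J_{f,\lambda}(s,d)\le J_{f,\lambda}(t,e).
\]
Specializing to $d=e$ gives monotonicity in $s$, and specializing to $s=t$ gives monotonicity in $d$. The concave case is the same with signs reversed. Since only dimension $2$ was used, this also proves the closing remark of the statement.

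\emph{Max cones.} These follow by the simultaneous reversal \eqref{eq:max-min-reversal}. Conjugation by $J_n$ preserves the Loewner order and commutes with entrywise maps and with convex combinations, so it maps comparable pairs in $\mathcal C_{\max,n}$ to comparable pairs in $\mathcal C_{\min,n}$ and back.

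\emph{Main obstacle.} There is no real obstacle; the proof is bookkeeping. The one point needing care is the orientation $B\preceq A$ in Definition~\ref{def:loewner-convexity}. With that orientation the Jensen gap has the larger argument weighted by $\lambda$, matching the definition of $J_{f,\lambda}$. With the opposite orientation one would obtain $J_{f,1-\lambda}$ instead, which is harmless because $\lambda$ ranges over all of $[0,1]$.
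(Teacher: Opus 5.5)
Your proposal is correct and matches the paper's proof step for step. It uses the same identity reducing the convexity gap to $A_{\min}\bigl(J_{f,\lambda}(x_1,d_1),\ldots,J_{f,\lambda}(x_n,d_n)\bigr)$, the same dimension-$2$ test pair $B=A_{\min}(s,t)\preceq A=A_{\min}(s+d,t+e)$ for necessity, and simultaneous reversal for the Max cones. Your specializations $d=e$ and $s=t$ simply make explicit a step the paper leaves implicit.
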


\begin{proof}
	Let
	\[
	A=A_{\min}(x),
	\qquad
	B=A_{\min}(y),
	\]
	where \(A,B\in\mathcal C_{\min,n}\) and \(B\preceq A\). Put
	\[
	d_k=x_k-y_k,
	\qquad k=1,\ldots,n.
	\]
	By the Loewner-order characterization for Min matrices,
	\[
	0\le d_1\le d_2\le\cdots\le d_n,
	\]
	while
	\[
	0\le y_1\le y_2\le\cdots\le y_n.
	\]
	A direct calculation gives
	\[
	\lambda f[A]+(1-\lambda)f[B]
	-
	f[\lambda A+(1-\lambda)B]
	=
	A_{\min}\bigl(
	J_{f,\lambda}(y_1,d_1),\ldots,
	J_{f,\lambda}(y_n,d_n)
	\bigr).
	\]
	
	Suppose first that \(J_{f,\lambda}\) is nondecreasing in each variable.
	Since \(J_{f,\lambda}(s,0)=0\), it is nonnegative. Since both \(y_k\) and
	\(d_k\) are nondecreasing in \(k\),
	\[
	0\le
	J_{f,\lambda}(y_1,d_1)
	\le\cdots\le
	J_{f,\lambda}(y_n,d_n).
	\]
	Theorem~\ref{thm:min-psd-tn} therefore implies
	\[
	f[\lambda A+(1-\lambda)B]
	\preceq
	\lambda f[A]+(1-\lambda)f[B].
	\]
	Thus the induced map is Loewner convex.
	
	Conversely, assume Loewner convexity in dimension \(2\). Let
	\[
	0\le s\le t,
	\qquad
	0\le d\le e,
	\]
	and set
	\[
	B=A_{\min}(s,t),
	\qquad
	A=A_{\min}(s+d,t+e).
	\]
	Then \(A,B\in\mathcal C_{\min,2}\), and
	\[
	A-B=A_{\min}(d,e)\succeq0.
	\]
	The Loewner convexity inequality gives
	\[
	A_{\min}\bigl(
	J_{f,\lambda}(s,d),
	J_{f,\lambda}(t,e)
	\bigr)
	\succeq0.
	\]
	By Theorem~\ref{thm:min-psd-tn},
	\[
	0\le
	J_{f,\lambda}(s,d)
	\le
	J_{f,\lambda}(t,e).
	\]
	Thus \(J_{f,\lambda}\) is nondecreasing in each variable.
	
	For Loewner concavity,
	\[
	f[\lambda A+(1-\lambda)B]
	-
	\lambda f[A]-(1-\lambda)f[B]
	=
	A_{\min}\bigl(
	-J_{f,\lambda}(y_1,d_1),\ldots,
	-J_{f,\lambda}(y_n,d_n)
	\bigr).
	\]
	If \(J_{f,\lambda}\) is nonincreasing in each variable, then
	\(J_{f,\lambda}(s,d)\le J_{f,\lambda}(s,0)=0\). Hence
	Theorem~\ref{thm:min-psd-tn} shows that the concavity inequality holds.
	Conversely, applying the concavity inequality in dimension \(2\)
	to
	\[
	B=A_{\min}(s,t),
	\qquad
	A=A_{\min}(s+d,t+e),
	\]
	where \(0\le s\le t\) and \(0\le d\le e\), gives
	\[
	J_{f,\lambda}(s,d)
	\ge
	J_{f,\lambda}(t,e),
	\qquad
	J_{f,\lambda}(s,d)\le0.
	\]
	Thus \(J_{f,\lambda}\) is nonincreasing in each variable.

	The Max statements follow by simultaneous reversal of rows and columns.
\end{proof}

For the full positive semidefinite cone on an open symmetric interval,
Hiai~\cite[Theorem~3.2(1)]{Hiai2009} proved that entrywise Loewner
convexity is equivalent to differentiability of \(f\) together with
Loewner monotonicity of \(f'\); see also
\cite[Theorem~4.3(1)]{GuillotKhareRajaratnam} and
\cite[Chapter~19]{Khare2022}. The next result is the
restricted-cone analogue.

\begin{theorem}\label{thm:loewner-convexity-regularity}
	Let \(f:[0,\infty)\to\mathbb R\). Under the comparable-pairs convention,
	the following assertions hold:
	\begin{enumerate}
		\item The entrywise map induced by \(f\) is Loewner convex on all Min
		cones, equivalently on all Max cones, if and only if
		\(f\in C^1([0,\infty))\) and both \(f\) and \(f'\) are convex on
		\([0,\infty)\).
		
		\item The entrywise map induced by \(f\) is Loewner concave on all Min
		cones, equivalently on all Max cones, if and only if
		\(f\in C^1([0,\infty))\) and both \(f\) and \(f'\) are concave on
		\([0,\infty)\).
	\end{enumerate}
	In either assertion, it is enough to assume the corresponding property in
	dimension \(2\).
\end{theorem}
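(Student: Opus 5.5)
By Proposition~\ref{prop:jensen-gap-characterization}, and since its necessity part already works in dimension $2$, assertion 1 is equivalent to the following analytic condition. For every $\lambda\in[0,1]$, the Jensen gap $J_{f,\lambda}(s,d)$ is nondecreasing in $s$ and in $d$ separately. So the plan is to show that this condition holds if and only if $f\in C^1([0,\infty))$ with $f$ and $f'$ convex. The concave case follows by the same argument with the inequalities reversed, or by applying the convex case to $-f$. Indeed $J_{-f,\lambda}=-J_{f,\lambda}$, so "nonincreasing in each variable" for $f$ is exactly "nondecreasing in each variable" for $-f$. The Max statements follow by simultaneous reversal, as before.

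\emph{Sufficiency.} Assume $f\in C^1$ and $f,f'$ are convex. Write
\[
J_{f,\lambda}(s,d)=\lambda\int_{0}^{d}f'(s+u)\,du-\int_0^{\lambda d}f'(s+u)\,du
=\int_0^d f'(s+u)\,d\mu_\lambda(u),
\]
where $\mu_\lambda=\lambda\,du|_{[0,d]}-du|_{[0,\lambda d]}$ is a signed measure of total mass $0$. Its distribution function $F(u)=\mu_\lambda([0,u])$ is $\le 0$ on $[0,d]$ and vanishes at $0$ and at $d$.

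For monotonicity in $s$, I will differentiate formally, in the sense of increments. We have $J(s',d)-J(s,d)=\int_0^d g(u)\,d\mu_\lambda(u)$ with $g(u)=f'(s'+u)-f'(s+u)$, and $g$ is nondecreasing because $f'$ is convex. Integration by parts gives $\int g\,d\mu_\lambda=-\int F\,dg\ge0$.

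For monotonicity in $d$, note that $J_{f,\lambda}(s,d)=\lambda f(s+d)+(1-\lambda)f(s)-f(s+\lambda d)$. Its derivative in $d$ is $\lambda\bigl(f'(s+d)-f'(s+\lambda d)\bigr)\ge0$, since $f'$ is nondecreasing. This holds because $f'$ is convex and $f'\ge$ its right limit. Actually I need $f'$ nondecreasing, so I must check whether convexity of $f$ suffices. It does: $f$ convex gives $f'$ nondecreasing.

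\emph{Necessity.} This is the main obstacle. Assume the Jensen gaps are monotone. Taking $d$-monotonicity from $d=0$ gives $J\ge0$, so $f$ is midpoint convex.

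Next I establish local boundedness. This comes from $J_{f,1/2}(s,d)\le J_{f,1/2}(t,d)$ for $t\ge s$. Comparing at $s=0$ bounds $f(s+d/2)$ from below by terms involving $f(0)$, $f(d)$, $f(d/2)$ and $f(s+d)$, and from above by the $s$-monotonicity at large $t$. If this gets messy, I will use the fact that a midpoint-convex function which is not convex is unbounded above on every interval. That would contradict $J_{f,1/2}(s,d)\le J_{f,1/2}(s,e)$ for suitable choices. So $f$ is convex on $(0,\infty)$.

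The key regularity step comes next. Monotonicity of $J_{f,\lambda}(\cdot,d)$ in $s$ should force the right and left derivatives to agree. I divide by $d$ and let $\lambda\to 0$ or $1$. As $\lambda\downarrow0$, $J_{f,\lambda}(s,d)/\lambda\to f(s+d)-f(s)-d f'_+(s)$. Its monotonicity in $s$ for each $d$ then gives, after letting $d\downarrow0$ along a family, that $f'_+$ has "increasing increments". More precisely, $f(s+d)-f(s)-df'_+(s)$ is nondecreasing in $s$. Comparing with $\lambda\uparrow1$, where $J/(1-\lambda)\to f(s)-f(s+d)+d f'_-(s+d)$, excludes jumps of $f'_+$: a jump at $s_0$ would make this quantity drop when $s$ crosses $s_0-d$. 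Hence $f\in C^1(0,\infty)$. Continuity at $0$ and existence of $f'(0)$ will be handled as in the proof of Theorem~\ref{thm:loewner-preserver}, by letting $s\downarrow0$ in the $s$-monotonicity.

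Finally, convexity of $f'$ follows by dividing the $\lambda\downarrow0$ limit inequality by $d^2$. Monotonicity in $s$ of $\int_0^d\bigl(f'(s+u)-f'(s)\bigr)du$ is equivalent to convexity of the continuous function $f'$: compare midpoints via $d\to0$ averaged increments. I will verify this last equivalence by a short averaging argument.
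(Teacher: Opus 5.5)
Your sufficiency argument is correct. The integration by parts against $\mu_\lambda$ is a longer way of writing the paper's identity $\partial_s J_{f,\lambda}(s,d)=J_{f',\lambda}(s,d)\ge0$, and your $d$-derivative is the paper's. The necessity half, however, has a genuine error at the step that carries the theorem: the exclusion of kinks. Your $\lambda\uparrow1$ limit
\[
\psi_d(s)=f(s)-f(s+d)+d\,f'_-(s+d)
\]
is a continuous function of $s$ plus the nondecreasing function $s\mapsto d\,f'_-(s+d)$, so it can never drop. At a kink $s_0$, where $f'_-(s_0)<f'_+(s_0)$, it jumps \emph{up} by $d\,(f'_+(s_0)-f'_-(s_0))$ as $s$ crosses $s_0-d$, which is fully consistent with monotonicity. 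The contradiction actually sits in your other limit, $\phi_d(s)=f(s+d)-f(s)-d\,f'_+(s)$, and at $s=s_0$ rather than at $s_0-d$. Since $f$ is continuous on $(0,\infty)$ and $f'_+(s)\to f'_-(s_0)$ as $s\uparrow s_0$, one gets $\lim_{s\uparrow s_0}\phi_d(s)-\phi_d(s_0)=d\,(f'_+(s_0)-f'_-(s_0))$. Monotonicity of $\phi_d$ forces this to be $\le0$, hence $f'_-=f'_+$ on $(0,\infty)$. With that replacement, your route is a valid and more elementary alternative to the paper's. The paper instead mollifies, shows each $f_\varepsilon'$ is convex, passes to the convex (hence continuous) limit $m=(f'_-+f'_+)/2$, and reads off $f'_-=f'_+$ from the one-sided limits of $m$.

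Two further steps need tightening. First, the midpoint-convexity and local-boundedness detour is unnecessary. Your claim that unboundedness ``would contradict'' $d$-monotonicity is also unsupported. Since $J_{f,\lambda}(s,d)\ge J_{f,\lambda}(s,0)=0$ holds for \emph{every} $\lambda\in[0,1]$, that is already the convexity inequality on $[0,\infty)$. Second, convexity of $f'$ via ``dividing by $d^2$'' and an unspecified averaging argument is left unproved; letting $d\to0$ after dividing by $d^2$ would need second derivatives you do not have. Once $f\in C^1(0,\infty)$ is established, it is immediate. The map $s\mapsto J_{f,\lambda}(s,d)$ is differentiable and nondecreasing, so its derivative $J_{f',\lambda}(s,d)$ is nonnegative for every $\lambda$, which is exactly convexity of $f'$.

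At the endpoint, your plan is sound. Continuity of $f$ at $0$ genuinely needs the $s$-monotonicity argument you indicate, since convexity alone allows $f(0)>f(0^+)$. Finiteness of $\lim_{x\downarrow0}f'(x)$ follows from $d\,f'(s)\ge f(s+d)-f(s)-\phi_d(s_1)$ for $0<s<s_1$.
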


\begin{proof}
	Suppose first that the induced map is Loewner convex. By
	Proposition~\ref{prop:jensen-gap-characterization}, the Jensen gap
	\[
	J_{f,\lambda}(s,d)
	=
	\lambda f(s+d)+(1-\lambda)f(s)-f(s+\lambda d)
	\]
	is nonnegative and nondecreasing in \(s\), for every
	\(0\le\lambda\le1\). Its nonnegativity is precisely Jensen convexity, so
	\(f\) is convex and hence continuous.

	We show that the gap monotonicity supplies the missing differentiability.
	Fix a compact interval \(K=[a,b]\subset(0,\infty)\), and choose
	\(\eta>0\) so that \([a-\eta,b+\eta]\subset(0,\infty)\). For
	\(0<\varepsilon<\eta\), let \(\rho_\varepsilon\ge0\) be an even smooth
	mollifier supported in \((-\varepsilon,\varepsilon)\), normalized by
	\(\int_{\mathbb R}\rho_\varepsilon(u)\,du=1\), and define
	\[
	f_\varepsilon(x)
	=
	\int_{\mathbb R}f(x-u)\rho_\varepsilon(u)\,du,
	\qquad x\in K.
	\]
	This is well defined because \(x-u\in[a-\eta,b+\eta]\subset(0,\infty)\)
	on the support of \(\rho_\varepsilon\). Whenever
	\(s,s+d\in K\), with the endpoints taken in the interior when a derivative
	is used,
	\[
	J_{f_\varepsilon,\lambda}(s,d)
	=
	\int_{\mathbb R}\rho_\varepsilon(u)
	J_{f,\lambda}(s-u,d)\,du.
	\]
	Indeed, every translated pair \(s-u,s+d-u\) remains in
	\([a-\eta,b+\eta]\), where the original gap condition applies. Thus this
	mollified gap is nonnegative and nondecreasing in \(s\). Since it is smooth,
	\[
	\frac{\partial}{\partial s}J_{f_\varepsilon,\lambda}(s,d)
	=J_{f_\varepsilon',\lambda}(s,d)\ge0.
	\]
	Consequently, \(f_\varepsilon'\) is convex on the interior of \(K\).

	Let \(f'_-(x)\) and \(f'_+(x)\) denote the one-sided derivatives of the
	convex function \(f\). Its almost-everywhere derivative is locally bounded
	and has these values as its left and right essential limits. Hence, by
	splitting the convolution integral at \(u=0\), evenness of the mollifier
	gives
	\[
	\lim_{\varepsilon\downarrow0}f_\varepsilon'(x)
	=
	\frac{f'_-(x)+f'_+(x)}2
	=:m(x).
	\]
	Since \(K\) was arbitrary, pointwise limits preserve the convexity
	inequality and show that \(m\) is convex on \((0,\infty)\); it is therefore
	continuous there. On the other hand, the
	standard one-sided derivative properties of convex functions give
	\[
	\lim_{y\uparrow x}m(y)=f'_-(x),
	\qquad
	\lim_{y\downarrow x}m(y)=f'_+(x).
	\]
	Continuity of \(m\) forces \(f'_-(x)=f'_+(x)\) for every \(x>0\). Hence
	\(f\) is differentiable on \((0,\infty)\), and \(f'=m\) is convex.

	The derivative also extends continuously to zero. Put \(g=f'\) on
	\((0,\infty)\). Since \(f\) is convex, \(g\) is nondecreasing, so
	\(L=\lim_{x\downarrow0}g(x)\) exists in \([-\infty,\infty)\). Convexity of
	\(g\) gives, for fixed \(0<a<b\) and \(0<x<a\),
	\[
	\frac{g(a)-g(x)}{a-x}
	\le
	\frac{g(b)-g(a)}{b-a}.
	\]
	Thus \(g\) is bounded below near zero and \(L\) is finite. The standard
	endpoint derivative formula for a finite convex function now yields
	\(f'_+(0)=L\). Defining \(f'(0)=L\) makes \(f'\) continuous and convex on
	\([0,\infty)\), and hence \(f\in C^1([0,\infty))\).

	Conversely, suppose \(f\in C^1([0,\infty))\) and both \(f\) and \(f'\)
	are convex. Then \(J_{f,\lambda}\ge0\). Moreover,
	\[
	\frac{\partial}{\partial d}J_{f,\lambda}(s,d)
	=
	\lambda\bigl(f'(s+d)-f'(s+\lambda d)\bigr)\ge0
	\]
	because \(f'\) is nondecreasing, and
	\[
	\frac{\partial}{\partial s}J_{f,\lambda}(s,d)
	=J_{f',\lambda}(s,d)\ge0
	\]
	because \(f'\) is convex. Proposition~\ref{prop:jensen-gap-characterization} now gives
	Loewner convexity. This proves the first assertion. Applying it to \(-f\)
	gives the concavity assertion.
\end{proof}

\begin{corollary}\label{cor:power-functions}
	Let \(p_\alpha(t)=t^\alpha\) for \(t\ge0\), where \(\alpha>0\). Then the
	following hold.
	\begin{enumerate}
		\item \(p_\alpha\) preserves positive semidefiniteness and total
		nonnegativity entrywise on all Min and Max cones for every
		\(\alpha>0\).
		
		\item \(p_\alpha\) preserves the strict Min and Max classes,
		equivalently the nonsingular inverse \(M\)-matrix subclasses within
		the Min and Max classes, for every \(\alpha>0\).
		
		\item \(p_\alpha\) preserves Loewner order entrywise on all Min and Max
		cones if and only if
		$
		\alpha\ge1.
		$
		
		\item On arbitrary pairs in all Min and Max cones, the entrywise map
		induced by \(p_\alpha\) is Loewner convex, equivalently Loewner concave,
		if and only if
		$
		\alpha=1.
		$
		
		\item Under the comparable-pairs convention, the entrywise map induced by
		\(p_\alpha\) is Loewner convex on all Min and Max cones if and only if
		$
		\alpha=1
		\ \text{or}\
		\alpha\ge2.
		$
		
		\item Under the comparable-pairs convention, the entrywise map induced by
		\(p_\alpha\) is Loewner concave on all Min and Max cones if and only if
		$
		\alpha=1.
		$
	\end{enumerate}
\end{corollary}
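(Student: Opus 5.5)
Each of the six items will follow by checking the relevant characterizing condition from the earlier results for \(p_\alpha(t)=t^\alpha\) on \([0,\infty)\), with the convention \(0^\alpha=0\) for \(\alpha>0\). The Max statements will come from the same results, since each of them already covers both cones.

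Items 1 and 2 are immediate. For every \(\alpha>0\), \(p_\alpha\) is nonnegative and nondecreasing on \([0,\infty)\), so Theorem~\ref{thm:entrywise-preserver} gives item~1. Moreover \(p_\alpha\) is positive and strictly increasing on \((0,\infty)\), so Proposition~\ref{prop:strict-preserver} gives the strict-class statement. Theorem~\ref{thm:strict-equivalences} identifies the strict classes with the nonsingular inverse \(M\)-matrix subclasses, which is the parenthetical equivalence in item~2.

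For item 3, Theorem~\ref{thm:loewner-preserver} reduces the claim to \(p_\alpha\) being nondecreasing and convex on \([0,\infty)\). This holds exactly when \(\alpha\ge1\); for \(0<\alpha<1\) the function is strictly concave. Item 4 follows from Proposition~\ref{prop:all-pairs-rigidity}: \(p_\alpha\) is affine on \([0,\infty)\) only for \(\alpha=1\).

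For items 5 and 6, Theorem~\ref{thm:loewner-convexity-regularity} requires \(p_\alpha\in C^1([0,\infty))\). For \(0<\alpha<1\) the derivative \(\alpha t^{\alpha-1}\) is unbounded near \(0\), so \(p_\alpha\) is not \(C^1\) on the closed half-line and both properties fail. For \(\alpha\ge1\), \(p_\alpha\in C^1([0,\infty))\) with \(p_\alpha'(t)=\alpha t^{\alpha-1}\), and the analysis splits into three cases:
\begin{itemize}
\item \(\alpha=1\): \(p_1'\equiv1\) is both convex and concave and \(p_1\) is affine, so both properties hold.
\item \(1<\alpha<2\): \(p_\alpha'\) is strictly concave, so Loewner convexity fails. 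Also \(p_\alpha\) is strictly convex, so Loewner concavity fails.
\item \(\alpha\ge2\): \(p_\alpha\) and \(p_\alpha'\) are both convex, so Loewner convexity holds. Loewner concavity fails because \(p_\alpha\) is not concave.
\end{itemize}
This yields exactly \(\alpha=1\) or \(\alpha\ge2\) in item~5 and \(\alpha=1\) in item~6.

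The only delicate point is the endpoint regularity for \(0<\alpha<1\). The theorem's requirement \(f\in C^1([0,\infty))\) includes a finite one-sided derivative at \(0\), and this fails because \(p_\alpha'(0^+)=+\infty\). For item~6 with \(\alpha<1\), the argument therefore rests on this failure rather than on concavity of \(p_\alpha\) or \(p_\alpha'\). To confirm the exclusion independently, I would check it directly with Proposition~\ref{prop:jensen-gap-characterization}. Concavity there requires \(J_{f,\lambda}(s,d)\) to be nonincreasing in \(d\). Yet \(J_{f,\lambda}(0,d)=(\lambda-\lambda^\alpha)d^\alpha\), which is strictly increasing in \(d\) when \(\alpha<1\), since then \(\lambda^\alpha>\lambda\) for \(0<\lambda<1\) and the coefficient is negative. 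So the direct check confirms that Loewner concavity fails for \(0<\alpha<1\).
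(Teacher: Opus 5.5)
Your main argument is correct. Items 1--4 are argued exactly as in the paper. For \(\alpha\ge1\), items 5--6 also follow the paper: non-convexity of \(p_\alpha'\) when \(1<\alpha<2\), and non-concavity of \(p_\alpha\) when \(\alpha>1\), which the paper phrases as a positive Jensen gap.

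The real difference is how you exclude \(0<\alpha<1\). You use the failure of \(p_\alpha\in C^1([0,\infty))\) in Theorem~\ref{thm:loewner-convexity-regularity}. The paper instead uses non-convexity of \(p_\alpha\) in item 5. In item 6 it shows directly, via Proposition~\ref{prop:jensen-gap-characterization}, that \(J_{p_\alpha,\lambda}(s,d)\) is strictly increasing in \(s\). Your route is legitimate given the theorem, but it makes the failure look like an endpoint artifact, and it is not one. In the theorem's proof, finiteness of \(f'(0^+)\) is itself deduced from convexity (for concavity, applied to \(-f\)) of \(f'\). So \(p_\alpha'(0^+)=+\infty\) is only a symptom of \(p_\alpha'=\alpha t^{\alpha-1}\) being strictly convex, hence not concave. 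Moreover, strict increase of the gap in \(s>0\) gives violating pairs already in \(\mathcal C_{\min,2}^{\circ}\), so the failure would persist with \(0\) removed from the domain.

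Your ``independent confirmation'', however, is wrong. You correctly compute \(J_{p_\alpha,\lambda}(0,d)=(\lambda-\lambda^\alpha)d^\alpha\) with \(\lambda-\lambda^\alpha<0\). But a negative multiple of the increasing function \(d^\alpha\) is strictly \emph{decreasing} in \(d\), which is exactly what Loewner concavity allows. In fact no check in the \(d\)-direction can detect the failure. For \(0<\alpha<1\) and \(d>0\), \(\partial_d J_{p_\alpha,\lambda}(s,d)=\lambda\bigl(p_\alpha'(s+d)-p_\alpha'(s+\lambda d)\bigr)\le0\) because \(p_\alpha'\) is decreasing. Also \(J_{p_\alpha,\lambda}\le0\) because \(p_\alpha\) is concave.

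The only violated condition is monotonicity in \(s\). For \(s,d>0\) and \(0<\lambda<1\), \(\partial_s J_{p_\alpha,\lambda}(s,d)=J_{p_\alpha',\lambda}(s,d)>0\) by strict convexity of \(t^{\alpha-1}\); this is the paper's argument. Your proof does not depend on the faulty check, so it stands once that paragraph is replaced by this computation or deleted.
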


	\begin{proof}
		For \(\alpha>0\), the function \(p_\alpha(t)=t^\alpha\) is nonnegative and
		nondecreasing on \([0,\infty)\). Hence the first assertion follows from
		Theorem~\ref{thm:entrywise-preserver}.
		
		On \((0,\infty)\), \(p_\alpha\) is positive and strictly increasing for
		every \(\alpha>0\). Therefore the second assertion follows from
		Proposition~\ref{prop:strict-preserver}.
		
For the third assertion, \(p_\alpha\) is continuous and nondecreasing on
\([0,\infty)\). By Theorem~\ref{thm:loewner-preserver}, it preserves
Loewner order on all Min and Max cones if and only if it is convex. The
function \(t^\alpha\) is convex on \([0,\infty)\) exactly when
\(\alpha\ge1\).

The fourth assertion follows from Proposition~\ref{prop:all-pairs-rigidity}, since
among the functions \(t^\alpha\) with \(\alpha>0\), only \(t\) is affine.

For Loewner convexity, if \(0<\alpha<1\), then \(p_\alpha\) is not convex.
If \(1<\alpha<2\), then
\[
p_\alpha'(t)=\alpha t^{\alpha-1}
\]
is not convex. If \(\alpha=1\), then \(p_\alpha\) is affine, while if
\(\alpha\ge2\), both \(p_\alpha\) and \(p_\alpha'\) are convex. Therefore,
by Theorem~\ref{thm:loewner-convexity-regularity}, the induced entrywise map is
Loewner convex if and only if
\[
\alpha=1
\qquad\text{or}\qquad
\alpha\ge2.
\]

Finally, consider Loewner concavity. If \(\alpha>1\), then \(p_\alpha\) is
strictly convex, so its Jensen gap is positive for \(d>0\) and
\(0<\lambda<1\), whereas Loewner concavity requires this gap to be
nonpositive. Thus Loewner concavity fails for \(\alpha>1\).

If \(0<\alpha<1\), fix \(d>0\) and \(0<\lambda<1\). For \(s>0\),
\[
\frac{\partial}{\partial s}J_{p_\alpha,\lambda}(s,d)
=
\alpha\left(
\lambda(s+d)^{\alpha-1}
+(1-\lambda)s^{\alpha-1}
-(s+\lambda d)^{\alpha-1}
\right)>0,
\]
because \(t^{\alpha-1}\) is strictly convex on \((0,\infty)\). Hence
\(J_{p_\alpha,\lambda}(s,d)\) is strictly increasing in \(s\), whereas
Proposition~\ref{prop:jensen-gap-characterization} requires it to be nonincreasing for
Loewner concavity. Thus Loewner concavity also fails for
\(0<\alpha<1\). When \(\alpha=1\), the Jensen gap is identically zero.
Consequently, the induced entrywise map is Loewner concave if and only if
\[
\alpha=1.
\]

	\end{proof}
	
	\begin{corollary}\label{cor:loewner-automorphism}
		Let \(f:[0,\infty)\to[0,\infty)\) be a bijection. Then the entrywise map
		\(A\mapsto f[A]\) is a Loewner order automorphism of all Min cones,
		equivalently of all Max cones, if and only if
		\[
		f(t)=ct,
		\qquad c>0.
		\]
	\end{corollary}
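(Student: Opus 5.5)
Here ``Loewner order automorphism'' means that the entrywise map \(A\mapsto f[A]\) is a bijection of \(\mathcal C_{\min,n}\) onto itself and that it preserves the Loewner order in both directions: \(A\preceq B\) if and only if \(f[A]\preceq f[B]\). The plan is to combine Theorem~\ref{thm:loewner-preserver} for \(f\) with the same theorem for the inverse bijection \(f^{-1}\). Sufficiency is immediate. If \(f(t)=ct\) with \(c>0\), then \(f[A]=cA\), which is a bijection of each \(\mathcal C_{\min,n}\) onto itself with inverse \(A\mapsto c^{-1}A\), and \(A\preceq B\iff cA\preceq cB\). The Max case follows from the Min case by simultaneous reversal.

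For necessity, assume the entrywise map is a Loewner order automorphism of every Min cone.

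\begin{enumerate}
\item Since \(f\) preserves order, Theorem~\ref{thm:loewner-preserver} shows that \(f\) is nondecreasing and convex on \([0,\infty)\).
\item A nondecreasing bijection of \([0,\infty)\) is strictly increasing and satisfies \(f(0)=0\).
\item The inverse of the entrywise map is the entrywise map of \(f^{-1}\), since \(f[A]\) is the Min matrix of \(f(x)\) and \(f\) is a bijection. By assumption this inverse also preserves Loewner order, so Theorem~\ref{thm:loewner-preserver} applied to \(f^{-1}\) shows that \(f^{-1}\) is convex as well.
\end{enumerate}

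It remains to show that an increasing bijection \(f\) with \(f\) and \(f^{-1}\) both convex is linear; this is the only step needing care. Convexity of an increasing inverse is equivalent to concavity of \(f\). Indeed, take \(u=f(s)\), \(v=f(t)\). Convexity of \(f^{-1}\) gives \(f^{-1}(\lambda u+(1-\lambda)v)\le \lambda s+(1-\lambda)t\). Applying the increasing function \(f\) yields \(\lambda f(s)+(1-\lambda)f(t)\le f(\lambda s+(1-\lambda)t)\). So \(f\) is both convex and concave, hence affine: \(f(t)=ct+b\). From \(f(0)=0\) we get \(b=0\), and surjectivity onto \([0,\infty)\) forces \(c>0\).

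Finally, the Max version follows from \(J_nA_{\max}(x)J_n=A_{\min}(y)\). Simultaneous reversal commutes with entrywise maps and preserves the Loewner order. Alternatively, the equivalence already recorded in Theorem~\ref{thm:loewner-preserver} can be used directly. I expect no real obstacle.
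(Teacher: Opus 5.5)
Your proof is correct and follows essentially the same route as the paper. You apply Theorem~\ref{thm:loewner-preserver} to both \(f\) and \(f^{-1}\), deduce that \(f\) is both convex and concave and hence affine, and then pin down \(b=0\) and \(c>0\) from \(f(0)=0\) and surjectivity. Your explicit check that the inverse of the entrywise map is the entrywise map of \(f^{-1}\), and your direct derivation of concavity of \(f\) from convexity of \(f^{-1}\), supply details the paper only asserts. You also correctly omit the paper's continuity step, which Theorem~\ref{thm:loewner-preserver} makes unnecessary.
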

	
	\begin{proof}
		If \(f\) is an order automorphism, then both \(f\) and \(f^{-1}\) preserve
		Loewner order. By Theorem~\ref{thm:loewner-preserver}, both have
		increasing increments, and hence both are nondecreasing.
		
		Since \(f\) is a nondecreasing bijection from \([0,\infty)\) onto itself,
		it is continuous. Indeed, a jump discontinuity would leave a nonempty
		interval outside the range. By Theorem~\ref{thm:loewner-preserver},
		\(f\) is convex. The same argument applied to \(f^{-1}\) shows that
		\(f^{-1}\) is convex.
		
		Since \(f^{-1}\) is convex and increasing, \(f\) is concave; equivalently,
		the inverse of an increasing convex function is concave. Hence \(f\) is
		both convex and concave, so it is affine:
		\[
		f(t)=ct+d.
		\]
		Because \(f\) is a bijection from \([0,\infty)\) onto itself, \(f(0)=0\),
		so \(d=0\). Also \(c>0\). The converse is immediate.
	\end{proof}
	
	\section{Concluding remarks}
	
	The Min and Max cones provide a simple but useful setting in which several
	positivity notions coincide. For Min matrices,
	\[
	A_{\min}(x)\succeq0
	\quad\Longleftrightarrow\quad
	A_{\min}(x)\text{ is TN}
	\quad\Longleftrightarrow\quad
	0\le x_1\le\cdots\le x_n,
	\]
	and the Max analogue follows by reversing the sequence. This elementary cone
	geometry leads to entrywise preserver results that are much less restrictive
	than those for general positive semidefinite matrices.
	
	The main entrywise preserver theorem shows that positive semidefiniteness and
	total nonnegativity have the same preservers on Min and Max cones:
	\[
	f[A]\succeq0
	\quad\text{for all }A\in\mathcal C_{\min,n}\text{ and all }n
	\]
	if and only if
	\[
	f(t)\ge0
	\quad\text{and}\quad
	f\text{ is nondecreasing on }[0,\infty).
	\]
	The same condition characterizes total nonnegativity preservation and the Max
	analogue.
	
In the strict case, positive definiteness, nonsingular total
nonnegativity, the nonsingular inverse \(M\)-matrix property, and
oscillatory behavior fit into a common framework. For \(n\ge2\), the
inverse matrices have symmetric irreducible tridiagonal nonsingular
\(M\)-matrix structure. The entrywise preservers of these strict classes
are precisely the positive-valued strictly increasing functions.

Loewner order preservation is governed by increasing increments, and this
condition is equivalent to the entrywise function being nondecreasing and
convex without any prior regularity assumption. On the Min and Max cones,
Loewner convexity on arbitrary pairs is rigid and forces the entrywise
function to be affine. On comparable pairs it is
governed by the associated Jensen gaps. This condition itself forces
\(C^1\) regularity and is equivalent to convexity of both \(f\) and \(f'\).
The analogous concavity condition forces both functions to be concave. Thus,
for power functions,
\[
t^\alpha:
\qquad
	\begin{cases}
	\alpha>0,
	& \substack{\text{preserves PSD/TN and nonsingular inverse }M\text{-structure}\\
		\text{on strict cones}},\\
	\alpha\ge1,
	& \text{preserves Loewner order},\\
	\alpha=1,
	& \substack{\text{induces a Loewner convex or concave map}\\
		\text{on arbitrary pairs}},\\
	\alpha=1\text{ or }\alpha\ge2,
	& \substack{\text{induces a Loewner convex map}\\
		\text{on comparable pairs}},\\
	\alpha=1,
	& \substack{\text{induces a Loewner concave map}\\
		\text{on comparable pairs}}.
\end{cases}
\]

	Several directions remain open. For example, mixed Min--Max structures lead
	naturally to discrete Green matrices of the form
	\[
	g_{ij}=p_{\min(i,j)}q_{\max(i,j)}.
	\]
	These matrices contain Min and Max matrices as special cases. Their
total-nonnegativity, minor, Hadamard-product, and factorization structures
have been studied extensively. A natural continuation of the present work
is to characterize their entrywise and Loewner-order preservers.

\end{document}